\documentclass[12pt, final]{amsart}
\usepackage{geometry}
\usepackage{amssymb}
\usepackage{pgf,tikz}
\usepackage{mathrsfs}
\usetikzlibrary{arrows}
\usepackage{stmaryrd}
\usepackage{graphicx}
\usepackage[colorlinks=true, citecolor=blue, linkcolor=blue]{hyperref}
\usepackage{ifdraft}
\ifdraft{
\newgeometry{a4paper, asymmetric, top=2.5cm, bottom=2.5cm, left=2cm, right=3cm, 
    marginparwidth=3cm, marginparsep=3pt}
\usepackage[bordercolor=white, color=white, colorinlistoftodos]{todonotes}
\usepackage[notref, notcite]{showkeys}

}{
\usepackage[disable]{todonotes}
}
\newcommand{\HOX}[1]{\todo[noline, size=\tiny]{#1}}

\makeatletter\providecommand\@dotsep{5}\def\listtodoname{List of Todos}\def\listoftodos{\hypersetup{linkcolor=black}\@starttoc{tdo}\listtodoname\hypersetup{linkcolor=blue}}\makeatother

\newtheorem{lemma}{Lemma}

\newtheorem{theorem}{Theorem}

\newtheorem{remark}{Remark}


\def\R{\mathbb R}

\def\N{\mathbb N}

\def\H{\mathcal H}

\def\p{\partial}
\DeclareMathOperator{\supp}{supp}
\DeclareMathOperator{\Div}{div}

\newcommand{\pair}[1]{\left\langle #1 \right\rangle}
\newcommand{\norm}[1]{\left\|#1 \right\|}

\def\inter{\text{int}}

\DeclareMathOperator{\WF}{WF}

\def\d{\downarrow}
\def\u{\uparrow}

\def\H{\mathcal H}

\ifdraft{
\date{May 2015, last edit by Lauri, compiled \today}
}

\title[Time reversal with stabilizing boundary conditions for PAT]{Time reversal method with stabilizing boundary conditions for Photoacoustic tomography}
\author{Olga Chervova}
\address{Department of Mathematics, University College London, Gower Street, London UK, WC1E 6BT.}
\email{o.chervova@ucl.ac.uk}
\author{Lauri Oksanen}
\address{Department of Mathematics, University College London, Gower Street, London UK, WC1E 6BT.}
\email{l.oksanen@ucl.ac.uk}
\subjclass{Primary: 35R30}
\keywords{Inverse problems, Photoacoustic tomography, wave equation, time reversal}

\begin{document}
\begin{abstract}
We study an inverse initial source problem 
that models Photoacoustic tomography measurements with array detectors,
and introduce a method that can be viewed as a modification of the so called back and forth nudging method. We
show that the method converges at an exponential rate under a natural visibility condition, with data given only on a part of the boundary of the domain of wave propagation. In this paper we consider the case of noiseless measurements. 
\end{abstract}
\maketitle

\section{Introduction}

We consider a mathematical model for the emerging hybrid imaging method called the Photoacoustic tomography (PAT). 
Hybrid imaging methods are based on a physical coupling 
between two types of waves, and 
in the case of PAT, the coupling is the photoacoustic effect,  that is, the conversion of electromagnetic energy, absorbed by a specimen, into the energy of propagating acoustic waves.
The electromagnetic radiation is produced (typically by a laser), and the acoustic pressure is recorded, outside the specimen. 

The time scales of the electromagnetic energy absorption 
and the acoustic wave propagation are different enough, so that the acoustic propagation can be modelled as an initial value problem for the wave equation, the initial source being proportional to the absorbed electromagnetic energy. 
The rationale behind PAT is that it combines 
the high contrast in
electromagnetic absorption, say between healthy and cancerous tissue in some specific energy bands, with the
high resolution of ultrasound. 
For more information on PAT, see \cite{Wang2009, PAT_review, Xu2006} and the mathematical reviews \cite{Kuchment2008,scherzer2010handbook}.

In the present work we consider a model of PAT measurements using array detectors. The difference from the more traditional point detector case is that 
the detectors act as reflectors for the acoustic waves  \cite{ellwood2014use, huang2013improving}. For mathematical studies on PAT with the reflecting walls see \cite{Cox2007, holman2015gradual, kunyansky2013photoacoustic}.
A typical array detector is a rectangle, and this motivates us to consider measurement of acoustic pressure on $(0,T) \times \Gamma_0$, where 
$T > 0$ and 
\HOX{Ask a ref from Ben}
\begin{align}
\label{typical_Gamma}
\Gamma_0 = \{(x^1, x^2, x^3) \in \p \Omega_0;\ x^j \le 1 \}
\end{align}
models three square arrays located in the corner of the octant
$$
\Omega_0 = \{(x^1, x^2, x^3) \in \R^3;\ x^j > 0 \},
$$
see Figure \ref{Figure 3dObject}. Recently a similar geometry, namely, two orthogonal array detectors forming a V-shape, was studied experimentally in \cite{ellwood2015orthogonal}.

\begin{figure}
  \includegraphics[width=0.5\linewidth]{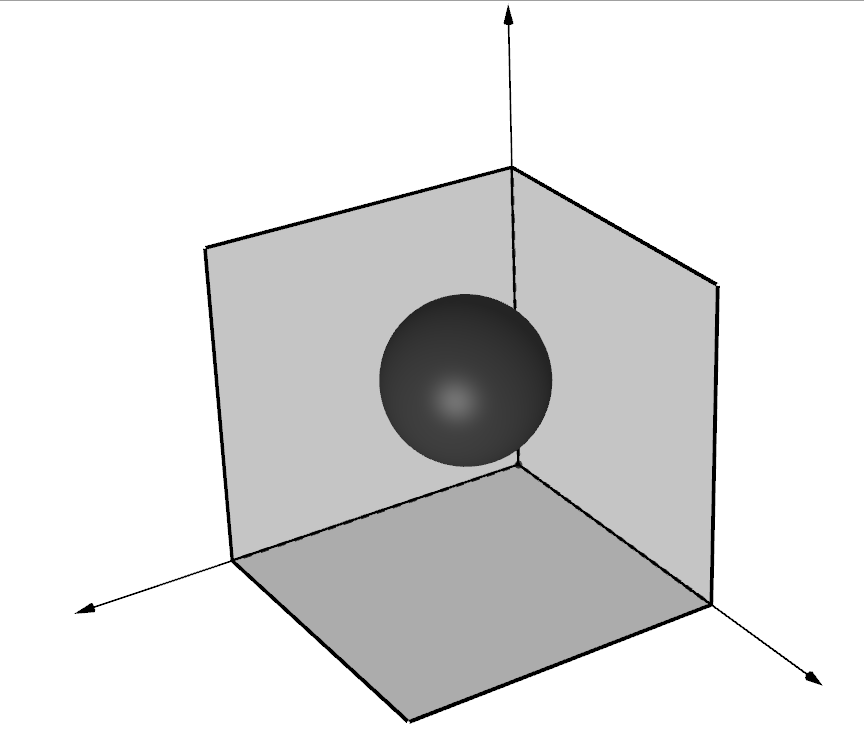}
  \caption{Three square arrays, modelled by $\Gamma_0$, in light grey. The dark grey ball depicts the specimen to be imaged.}
  \label{Figure 3dObject}
\end{figure}

In order to avoid mathematical technicalities,
we will consider wave propagation in an unbounded 
open and connected set $\Omega \subset \R^n$ with smooth boundary. Such $\Omega$ can be seen as an approximation of $\Omega_0$.
We assume that a strictly positive $c \in C^\infty(\overline \Omega)$ gives the speed of sound in $\Omega$.
A PAT measurement 
on an open set $\Gamma \subset \p \Omega$ 
for $T > 0$ time units
is then modelled by 
the trace $w|_{(0,T) \times \Gamma}$
where $w$ solves the wave equation 
\begin{align}
\label{eq_wave_PAT}
\begin{cases}
\p_t^2 w - c^2 \Delta w = 0, &\text{in $(0,T) \times \Omega$},
\\
\p_\nu w  = 0,
 &\text{in $(0,T)  \times \p \Omega$},
\\
w|_{t=0} = w_0,\ \p_t w|_{t=0} = 0,
&\text{in $\Omega$},
\end{cases}
\end{align}
and $\supp(w_0) \subset \Omega$ is assumed to be compact.
The objective is to:
\begin{itemize}
\item[(P)] Recover $w_0$ given $w|_{(0,T) \times \Gamma}$.
\end{itemize}

The finite speed of propagation property satisfied by the solution of (\ref{eq_wave_PAT})
implies that if $\supp(w_0)$ is large in comparison with $\Gamma$ and $T$,
then $w|_{(0,T) \times \Gamma}$ does not determine $w_0$ uniquely. 
Indeed, the initial pressure $w_0$ is determined by the measurement $w|_{(0,T) \times \Gamma}$ if and only if 
$d(x, \Gamma) \le T$ for all $x \in \supp(w_0)$. Here $d(\cdot, \cdot)$ is the Euclidean distance 
if $c=1$ identically and $\Omega$ is convex and, in general, it is the distance function on the Riemannian manifold with boundary $(\Omega, c^{-2} dx^2)$.

In this paper, we introduce a time reversal method that recovers 
$w_0$ 
given the measurement 
$w|_{(0,T) \times \Gamma}$,
assuming the sharp condition $d(\supp(w_0), \Gamma) \le T$.
Moreover, under the further assumption that all the singularities of $w_0$ propagate to $\Gamma$ in time $T$,
see the condition (VC) below for the precise formulation,
we show that the method converges at an exponential rate.
To our knowledge, this is the first method with proven exponential convergence rate in the case
that measurements are given on a bounded set and the domain of wave propagation is unbounded.

\subsection{Previous literature}

Our time reversal method is a part of the tradition \cite{Haine2014, Ito2011, Ramdani2010} of 
back and forth nudging methods originating from \cite{Auroux2005}.
It is also similar to the tradition of time reversal  methods \cite{Finch2004, Hristova2009, Hristova2008} and \emph{iterative} time reversal methods 
\cite{Acosta2015, Qian2011, Stefanov2009a, Stefanov2011a, Stefanov2015b}.

The closest work to the present one is 
\cite{Nguyen2015} where a time reversal method 
similar to ours is introduced in the context of PAT.
The method in \cite{Nguyen2015} uses the same stabilizing boundary conditions that we are using, see (\ref{eq_wave_nudge}) below, and although the result \cite{Nguyen2015} was obtained independently from the tradition originating from \cite{Auroux2005}, it can be viewed as a part of this tradition in the sense that it fits in the abstract setting \cite{Ramdani2010}. 

The result \cite{Nguyen2015} assumes the  geometric control condition by Bardos, Lebeau and Rauch \cite{Bardos1992},
and this is a global assumption on the whole domain  $\Omega$. 
The main novelty in the present work is that our geometric condition, see (VC) below, is local in the following sense: suppose that the accessible part of the boundary $\Gamma$ is strictly convex (in the sense of the second fundamental form), then our method converges exponentially if $w_0$ is supported in a sufficiently small neighbourhood of $\Gamma$.
In the case that $c=1$ identically, this neighbourhood is the convex hull of $\Gamma$. Moreover, our result also applies to cases where $\Gamma$ is convex but not strictly convex, an example being (a smooth approximation of) $\Gamma_0$ in Figure \ref{Figure 3dObject}.

We prove the exponential convergence by using microlocal analysis, see Lemma \ref{lem_microlocal} below, and this step can be viewed as a local analogue of \cite{Bardos1992} under the local condition (VC).
Let us also point out that the condition (VC) does not use signals reflected from the inaccessible part of the boundary $\p \Omega \setminus \Gamma$ and, in particular, it does not require the domain $\Omega$ to be a closed cavity. This is motivated by practical applications 
since the signal from $w_0$ typically deteriorates badly after reflections from $\p \Omega \setminus \Gamma$.
For example, in recent experimental study \cite{ellwood2014use} virtually nothing could be detected after the first two reflections even in media with low acoustic absorption (water in the case of \cite{ellwood2014use}). See also \cite{Cox2007, cox2009photoacoustic} for other experimental studies in case when $\Omega$ is a closed cavity.

Of the above mentioned methods, only \cite{Haine2014} is applicable  to the case that $\Omega$ is unbounded. 
The main difference between \cite{Haine2014} and the present work is that we show that our method converges to $w_0$, whereas in \cite{Haine2014} convergence is shown only up to an abstract projection that is not characterised in terms of Sobolev spaces.

Let us also point out that for the iterative time reversal methods that do not use stabilizing boundary conditions
\cite{Stefanov2009a,Stefanov2011a,Stefanov2015b},
exponential convergence has been shown only in the full data case, that is, when the support of $w_0$ is enclosed by $\Gamma$. Of these three methods, \cite{Stefanov2015b} is the closest to the present one. There the problem (P) is considered in a bounded domain $\Omega$ with Neumann boundary conditions, and the method is shown to converge exponentially when $\Gamma = \p \Omega$.

\section{Statement of the results}

Let us formulate our time reversal method 
in a slightly more general context than (\ref{eq_wave_PAT}).
Let $(M, g)$ be a smooth Riemannian manifold with boundary,
and let $\mu \in C^\infty(M)$ be strictly positive.
Consider the wave equation,
\begin{align}
\label{eq_wave_meas}
\begin{cases}
\p_t^2 v - \Delta_{g,\mu} v = 0, &\text{in $(-T,T) \times M$},
\\
\p_\nu v  = 0,
 &\text{in $(-T,T)  \times \p M$},
\\
 v|_{t = 0} = v_0,\ \p_t v|_{t = 0} = v_1, &\text{in $M$},
\end{cases}
\end{align}
where $\Delta_{g,\mu}$ and $\p_\nu$ are the weighted Laplace-Beltrami operator
and the normal derivative,
\begin{align*}
\Delta_{g,\mu}u = \mu^{-1} \Div(\mu \nabla u),
\quad \p_\nu u = \left\langle \nu, \nabla u\right\rangle.
\end{align*}
Here $\nu$ is the outward pointing unit normal vector field on $\p M$, $\Div$ and $\nabla$ are the
divergence and gradient on $(M,g)$, and $\langle \cdot, \cdot
\rangle$ denotes the inner product with respect to the metric
$g$. Occasionally, we use also the notation $|\xi|^2 = \pair{\xi, \xi}$,
$\xi \in T_x M$, $x \in M$.

Note that the equation (\ref{eq_wave_PAT}) is a special case of (\ref{eq_wave_meas}).
Indeed, as $\p_t w|_{t=0} = 0$, the function 
$$
v(t,x) = 
\begin{cases}
w(t,x), & t \in (0,T),
\\
w(-t,x), & t \in (-T,0),
\end{cases}
$$
satisfies (\ref{eq_wave_meas}), 
with 
$g = c^{-2}dx^2$, $\mu = c^{n-2}$,
$v_0 = w_0$ and $v_1 = 0$,
when $w$ satisfies (\ref{eq_wave_PAT}).
Let $\Gamma \subset \p M$ be open and define 
$$
\Lambda V_0 = v|_{(-T,T) \times \Gamma}, \quad V_0 = (v_0, v_1),
$$
where $v$ is the solution of (\ref{eq_wave_meas}).
The PAT problem (P) is a special case of the inverse initial source problem
to recover $V_0$ given $\Lambda V_0$.

We will next introduce the function spaces and operators used in our time reversal iteration. 
We write for a compact set $\mathcal K \subset M$,
$$
\H(\mathcal K) = \{(u_0, u_1) \in H^1(M) \times L^2(M);\ \supp(u_j) \subset \mathcal K,\ j = 0,1 \}.
$$
When $\mathcal K \ne M$, we equip $\H(\mathcal K)$ with the energy norm defined by
\begin{align}
\label{energy_norm}
\norm{U}_*^2 
= \frac 1 2 \int_M ( |\nabla u_0|^2 + |u_1|^2 )\, \mu dx,
\quad U = (u_0, u_1),
\end{align}
where $dx$ stands for the Riemannian volume measure on $(M,g)$.
By \cite{Lasiecka1989} the map $\Lambda$ is continuous from $\H(\mathcal K)$ to $H^1((-T,T) \times \Gamma)$ when $\mathcal K \subset M^\inter$.

Note that $\norm{\cdot}_*$ fails to be norm on $\H(\mathcal K)$ if $\mathcal K = M$, since there are constant non-zero functions $U \in \H(M)$ and $\norm{U}_* = 0$ for them. 
To avoid technicalities related to this, we will make the standing assumption that 
$M$ is unbounded.
This assumption reflects also the fact that we are mainly interested in measurement geometries similar to (\ref{typical_Gamma}).

Let $\Lambda V_0$ be a fixed measurement 
where $V_0 \in \H(K)$ for a compact set $K \subset M^\inter$.
Define the time intervals $I^+ = (0,T)$ and $I^- = (-T,0)$, and write 
$$
t_\u^+ = 0, \quad t_\d^+ = T,
\quad
t_\u^- = -T, \quad t_\d^- = 0,
$$
for the boundary points of the intervals $I^\pm$.
We choose a non-negative cut off function $\chi_0 \in C_0^\infty(\p M)$ satisfying $\supp(\chi_0) = \overline \Gamma$, and consider the wave equation 
\begin{align}
\label{eq_wave_nudge}
\begin{cases}
\p_t^2 u - \Delta_{g,\mu} u = 0, &\text{in $I^\pm \times M$},
\\
\p_\nu u  + \chi\p_t u = \chi \partial_t\Lambda V_0,
 &\text{in $I^\pm  \times \p M$},
 \\
 u|_{t = t_{\u\d}^\pm} = u_0,\ \p_t u|_{t = t_{\u\d}^\pm} = u_1, &\text{in $M$},
\end{cases}
\end{align}
where $\chi = \chi_0$ in the two cases $t_\u^\pm$
and $\chi = -\chi_0$ in the two cases $t_\d^\pm$.
Note that (\ref{eq_wave_nudge}) is solved forward in time in the former two cases, and backward in time in the latter two.
We write  
$$
T_\u^+ = T, \quad T_\d^+ = 0,
\quad
T_\u^- = 0, \quad T_\d^- = -T,
$$
for the end point of $I^\pm$ opposite to $t_{\u\d}^\pm$,
and define the nudging operators $N_{\u\d}^\pm$ by
$$
N_{\u\d}^\pm (U_0; \Lambda V_0) = (u(T_{\u\d}^\pm), \p_t u(T_{\u\d}^\pm)),
$$
where $u$ is the solution of (\ref{eq_wave_nudge})
with the initial conditions $U_0 = (u_0, u_1)$ at $t_{\u\d}^\pm$.

The iterative step of our time reversal method is given by the average of the two nudging cycles on $I^+$ and $I^-$ respectively, that is,
\begin{align*}
N(U_0; \Lambda V_0) 
&= \left(N^+(U_0; \Lambda V_0) + N^-(U_0; \Lambda V_0)\right)/2, 
\\
N^+(U_0; \Lambda V_0) 
&= N_\d^+ (N_\u^+(U_0; \Lambda V_0); \Lambda V_0),
\\
N^-(U_0; \Lambda V_0) 
&= N_\u^- (N_\d^-(U_0; \Lambda V_0); \Lambda V_0).
\end{align*}
We emphasize that the stabilizing boundary condition in (\ref{eq_wave_nudge}) is used only in the computational procedure, whereas the measurement data is generated by (\ref{eq_wave_meas}) with homogeneous Neumann boundary condition.

Before stating our results precisely, let us consider informally the iteration $N^j$, $j=1,2,\dots$, based on the nudging map $N(U_0) = N(U_0; \Lambda V_0)$.
The nudging map $N$ is a contraction (under suitable geometric conditions), since the
difference $w$ of two solutions of (\ref{eq_wave_nudge}), with different initial conditions, satisfies the stabilizing boundary condition $\p_\nu w + \chi \p_t w = 0$,
and this causes the energy of $w$ to decrease.
Thus $N$ admits a unique fixed point and the iteration $N^j U_0$ converges to the fixed point for any $U_0$. (Below we take $U_0 = 0$ for convenience.)
Moreover, the fixed point is $V_0$. Indeed, the solution $u$ of (\ref{eq_wave_nudge}) with the initial condition $(u_0, u_1) = V_0$ at the time $t=0$
coincides with the solution $v$ of (\ref{eq_wave_meas}) since 
$$
\p_\nu v + \chi \p_t v = \chi \p_t v = \chi \p_t \Lambda V_0.
$$
Hence $N^+_\u (V_0; \Lambda V_0) = (v(T), \p_t v(T))$, and similarly 
$N^+ (V_0; \Lambda V_0) = (v(0), \p_t v(0)) = V_0$.
The other cycle is analogous, and thus $N(V_0)=V_0$. 

Let us now formulate our main results. We define the domain of influence
$$
M(\Gamma, T) = \{x \in M;\ d(x, \Gamma) \le T\},
$$
where $d(\cdot, \cdot)$ is the distance function on $(M,g)$,
and use the shorthand notation
\begin{align*}
\H = \H(M(\Gamma, T)).
\end{align*}
Furthermore, we denote by $P_K$, $K \subset M(\Gamma,T)$, the orthogonal projection $\H \to \H(K)$ with respect to the energy inner product, that is, the inner product
associated to the norm (\ref{energy_norm}).

Due to the finite speed of propagation,  
it is necessary that $K \subset M(\Gamma, T)$
in order to fully recover $V_0 \in \H(K)$
given the measurement $\Lambda V_0$.
Our first result gives a time reversal iteration that converges to $V_0$ under this condition.

\begin{theorem}
\label{th_unstable}
Let $K \subset M^\inter$ be compact and suppose that $K \subset M(\Gamma,T)$.
Then for all $V_0 \in \H(K)$ it holds that $\lim_{j \to \infty} (P_K N)^j (0) = V_0$
where $N = N(\cdot; \Lambda V_0)$.
\end{theorem}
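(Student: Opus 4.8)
The plan is to exploit that, for a fixed measurement $\Lambda V_0$, the nudging map $N = N(\cdot\,;\Lambda V_0)$ is \emph{affine}. Writing $L$ for its linear part (the map obtained by replacing the data $\chi\p_t\Lambda V_0$ in (\ref{eq_wave_nudge}) by $0$) and $b = N(0;\Lambda V_0)$, we have $N(U) = LU + b$. Since $V_0\in\H(K)$ is a fixed point of $N$, as recalled before the statement, applying the orthogonal projection $P_K$ to the identity $LV_0 + b = V_0$ gives $V_0 = P_KLV_0 + P_Kb$, and subtracting this from $P_KN(U) = P_KLU + P_Kb$ shows that $F := P_KN$ satisfies $F(U) - V_0 = P_KL(U - V_0)$ for every $U\in\H(K)$. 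Setting $A := P_KL|_{\H(K)}$ we therefore obtain $(P_KN)^j(0) - V_0 = -A^jV_0$, so the theorem reduces to the strong convergence $A^jV_0 \to 0$ in $\H(K)$ for all $V_0\in\H(K)$. To see that $A$ is a contraction I would record, for the difference of two solutions of (\ref{eq_wave_nudge}) (which satisfies $\p_\nu u + \chi\p_t u = 0$), the standard energy identity for the forward cycle on $I^+$,
\[
\norm{L^+U}_*^2 = \norm{U}_*^2 - \int_0^T\!\!\int_{\p M}\chi_0(\p_t u)^2\,\mu\,dS\,dt - \int_0^T\!\!\int_{\p M}\chi_0(\p_t u')^2\,\mu\,dS\,dt,
\]
with $u,u'$ the solutions of the two half-cycles, together with the analogous identity on $I^-$; hence $L^\pm$, $L = (L^++L^-)/2$ and $A$ are all non-expansive for $\norm{\cdot}_*$.

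The key structural point I would then establish is that $L$ is \emph{self-adjoint} for the energy inner product $\pair{\cdot,\cdot}_*$. This should follow from a reciprocity identity between the two cycles: under the time reversal $t\mapsto -t$, which preserves the wave equation and exchanges the two signs of the damping $\chi$, the cycle $N^-$ on $I^-$ is the adjoint of the cycle $N^+$ on $I^+$, i.e. $L^- = (L^+)^*$, so that $L = (L^+ + (L^+)^*)/2$ is self-adjoint. As $P_K$ is the orthogonal projection onto $\H(K)$, the restriction $A = P_KL|_{\H(K)}$ is then a self-adjoint contraction on the Hilbert space $\H(K)$, and its spectrum lies in $[-1,1]$.

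It remains to show that $1$ and $-1$ are \emph{not} eigenvalues of $A$; the strong convergence $A^jV_0\to 0$ then follows from the spectral theorem, since $\norm{A^jV_0}_*^2 = \int_{[-1,1]}\lambda^{2j}\,d\norm{E_\lambda V_0}_*^2 \to \norm{E_{\{1\}}V_0}_*^2 + \norm{E_{\{-1\}}V_0}_*^2 = 0$. Suppose $AV = \lambda V$ with $|\lambda| = 1$ and $V\in\H(K)$. Then $\norm{AV}_* = \norm{V}_*$ forces equality throughout $\norm{P_KLV}_*\le\norm{LV}_*\le\tfrac12(\norm{L^+V}_*+\norm{L^-V}_*)\le\norm{V}_*$; in particular both cycles are lossless, which by the energy identity above means $\p_t u = 0$ on $\Gamma$, hence (since $\supp\chi_0 = \overline\Gamma$) $\p_\nu u = 0$ on all of $\p M$. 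Thus $u$ is the \emph{Neumann} solution of (\ref{eq_wave_meas}) with Cauchy data $V$ at $t=0$, and a lossless forward–backward cycle returns $V$ to itself, so $L^\pm V = V$ and $AV = V$ (ruling out $\lambda = -1$ at once). Collecting the two cycles, the same Neumann solution $u$ satisfies $\p_t u = 0$ on the whole of $(-T,T)\times\Gamma$. Applying this to $\tilde u := \p_t u$, which solves the Neumann wave equation with vanishing Cauchy data on the open set $(-T,T)\times\Gamma$, the unique continuation theorem of Tataru yields $\tilde u = 0$ on the double cone of influence, in particular $\tilde u|_{t=0} = v_1 = 0$ and $\p_t\tilde u|_{t=0} = \Delta_{g,\mu}v_0 = 0$ on $\{d(\cdot,\Gamma) < T\}$; together with $\supp V\subset K\subset M(\Gamma,T)$ this forces $v_1 = 0$ and $v_0$ harmonic of compact support, whence $V = 0$.

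This unique continuation step is where I expect the main difficulty to lie: it is exactly the qualitative counterpart of the microlocal Lemma \ref{lem_microlocal} used for the exponential rate, and it is here that the sharp hypothesis $K\subset M(\Gamma,T)$ enters, through the fact that Cauchy data on the time interval $(-T,T)$ controls precisely the region $\{d(\cdot,\Gamma)\le T\}$. Some care is also needed to justify the energy identity and the reciprocity $L^- = (L^+)^*$ at the level of finite-energy ($H^1\times L^2$) solutions, to make sense of the trace of $\p_t u$ on $\Gamma$ (using the hidden regularity of \cite{Lasiecka1989}), and to treat the measure-zero set $\{d(\cdot,\Gamma) = T\}$ when passing from $\{d<T\}$ to the support condition; these, however, are technical rather than conceptual.
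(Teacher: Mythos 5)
Your overall architecture is essentially the paper's: the affine reduction $(P_KN)^j(0)-V_0=-A^jV_0$ is exactly the nudging identity of Lemmas \ref{lem_key}--\ref{lem_nudge} (your $L$ is the paper's $S$), the equality-case analysis via the energy identity and Tataru's theorem is the content of Lemmas \ref{lem_energy_decay}--\ref{lem_energy_S}, and the self-adjointness of $P_KS$ is the paper's Lemma \ref{lem_S}. However, your justification of self-adjointness is wrong as stated: the claimed reciprocity $L^-=(L^+)^*$ does not hold in general. Write $B$ for the forward damped propagator over a time interval of length $T$; since the equation is autonomous, $S_\u^+=S_\u^-=B$. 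The correct adjoint relation --- the one the paper proves in Lemma \ref{lem_S} by observing that $\pair{U_\u(t),U_\d(t)}_*$ is constant in $t$ for two solutions with opposite damping signs --- is the \emph{within-cycle} relation $S_\d^+=(S_\u^+)^*=B^*$ and $S_\d^-=(S_\u^-)^*$; equivalently, time reversal $\mathcal T(u_0,u_1)=(u_0,-u_1)$ implements the adjoint on the \emph{half-cycle} maps, $\mathcal T B\mathcal T=B^*$. Consequently $L^+=S_\d^+S_\u^+=B^*B$ and $L^-=S_\u^-S_\d^-=BB^*$. Thus $(L^+)^*=L^+=B^*B$ while $L^-=BB^*$, and these coincide only if $B$ is normal, which there is no reason to expect for a damped wave propagator: conjugation by $\mathcal T$ maps $L^+$ to $L^-$, but it reverses the order of composition, so it does not identify $L^-$ with $(L^+)^*$. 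Your needed conclusion --- that $L$, hence $A=P_KL|_{\H(K)}$, is self-adjoint --- is nevertheless true, because each cycle operator is \emph{separately} self-adjoint (indeed positive), being of the form $B^*B$ resp.\ $BB^*$; but establishing this requires precisely the within-cycle argument of Lemma \ref{lem_S}, not the cross-cycle identity you invoke.

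Once that step is repaired, the rest of your proof goes through, and its end-game is genuinely different from the paper's. The paper shows $R=P_KS$ is self-adjoint and \emph{positive}, obtains strong convergence of $R^j$ from the monotone-convergence theorem for non-increasing sequences of positive operators, and kills the limit $R^\infty$ by the fixed-point relation $RR^\infty=R^\infty$ combined with the strict energy loss from Lemma \ref{lem_energy_S}. You instead use only self-adjointness: the spectrum of the self-adjoint contraction $A$ lies in $[-1,1]$, your equality-case analysis (lossless cycles $\Rightarrow$ Neumann solution with $\p_t u=0$ on $(-T,T)\times\Gamma$ $\Rightarrow$ Lemma \ref{lem_unique_cont} $\Rightarrow$ $V=0$) shows $\pm1$ are not eigenvalues, hence $E_{\{\pm1\}}=0$ and dominated convergence in the spectral representation gives $A^jV_0\to0$. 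Both routes rest on the same two analytic inputs (the energy identity and Tataru's unique continuation); yours trades positivity for the spectral theorem. Note that with the corrected within-cycle argument you get positivity of $L^\pm$ for free, which would also shorten your eigenvalue discussion: for positive $A$ only $\lambda=1$ needs to be excluded.
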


We say that a compact set $K \subset M^\inter$ satisfies 
the visibility condition if 
\begin{itemize}
\item[(VC)] For all $x \in K$ and all $\xi \in T_x K$ satisfying $|\xi| = 1$
there is $t \in (-T,T)$ such that 
$$
\gamma(t; x, \xi) = y \in \Gamma
\quad \text{and} \quad
\dot \gamma(t; x, \xi) \notin T_y \Gamma.
$$
Here $\gamma = \gamma(\cdot; x,\xi)$ is the geodesic of $(M,g)$ satisfying $\gamma(0) = x$
and $\dot \gamma(0) = \xi$,
and we assume that $\gamma$ does not intersect the boundary $\p M$ between times $0$ and $t$.
\end{itemize}

Let us point out that a slightly more general version of (VC) is necessary for continuity of the inverse 
$$
\Lambda^{-1} : H^1((-T,T) \times \Gamma) \to \H(K).
$$
Indeed, if geodesics $\gamma$ are extended by reflection or suitable gliding when they intersect $\p M \setminus \Gamma$,
see \cite{Bardos1992} for the precise definition,
and if there are $x \in K$ and $\xi \in T_x K$
such that $|\xi| = 1$ and that the corresponding extended $\gamma$ does not intersect $\Gamma$,
then $\Lambda^{-1}$ is not continuous \cite[Th. 3.2]{Bardos1992}.

Our second result says that the time reversal iteration converges to $V_0$
at an exponential rate, assuming the visibility condition.

\begin{theorem}
\label{th_stable}
Let a compact set $K \subset M^\inter$ 
satisfy the visibility condition (VC). 
Then there is a linear map $R : \H(K) \to \H(K)$
such that $\norm{R}_{\H(K) \to \H(K)} < 1$ and that
for all $V_0 \in \H(K)$
$$
(P_K N)^j (0) = V_0 - R^j V_0, \quad j=1,2,\dots,
$$
where $N = N(\cdot; \Lambda V_0)$.
\end{theorem}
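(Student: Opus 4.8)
The plan is to exploit the affine structure of the nudging map to reduce the claim to a single contraction estimate, and then to obtain that estimate from an observability inequality supplied by (VC). Since the data $\Lambda V_0$ is held fixed, the map $U_0 \mapsto N(U_0; \Lambda V_0)$ is affine; write $L$ for its linear part, namely the operator sending $U_0$ to the outcome of one full cycle of (\ref{eq_wave_nudge}) with the \emph{homogeneous} stabilizing boundary condition $\p_\nu u + \chi \p_t u = 0$, so that $N(U_0; \Lambda V_0) = L U_0 + N(0; \Lambda V_0)$. By finite speed of propagation the states occurring in the iteration stay supported in a fixed compact subset of $M^\inter$, so $P_K$ applies and I set $R = P_K L|_{\H(K)} : \H(K) \to \H(K)$. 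As noted above, $V_0$ is the fixed point of $N(\cdot; \Lambda V_0)$, and $P_K V_0 = V_0$; hence for the iterates $U_j = (P_K N)^j(0)$ the error $E_j = V_0 - U_j$ obeys $E_{j+1} = P_K L(V_0 - U_j) = R E_j$, and with $E_0 = V_0$ this gives $(P_K N)^j(0) = V_0 - R^j V_0$. It therefore remains only to prove $\norm{R}_{\H(K) \to \H(K)} < 1$.

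For the contraction estimate I would run the energy balance for (\ref{eq_wave_nudge}) with homogeneous boundary data. Pairing the equation with $\p_t u$ and integrating by parts, the sole boundary contribution is $\int_{\p M} \p_\nu u\, \p_t u\, \mu\, dS = -\int_\Gamma \chi_0 (\p_t u)^2 \mu\, dS$, so the energy (\ref{energy_norm}) is non-increasing along each leg of a cycle. Summing the forward and backward legs yields, for each of the two cycles, the identity $\norm{L^\pm U_0}_*^2 = \norm{U_0}_*^2 - D^\pm(U_0)$, where $D^\pm(U_0) = \int_{I^\pm}\int_\Gamma \chi_0 (\p_t u)^2 \mu\, dS\, dt \ge 0$ is the boundary dissipation accumulated on $\Gamma$. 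Since $L = (L^+ + L^-)/2$ and $P_K$ is an orthogonal projection, hence non-expansive in $\norm{\cdot}_*$, convexity of $\norm{\cdot}_*^2$ gives
\[
\norm{R U_0}_*^2 \le \norm{L U_0}_*^2 \le \norm{U_0}_*^2 - \tfrac12\bigl(D^+(U_0) + D^-(U_0)\bigr).
\]
Thus $\norm{R} < 1$ follows once I establish an observability inequality $D^+(U_0) + D^-(U_0) \ge c \norm{U_0}_*^2$ with $c > 0$, uniformly over $U_0 \in \H(K)$.

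That observability inequality is the heart of the matter and the step I expect to be the main obstacle. The two cycles are arranged so that $N^+$ probes the solution on $\Gamma$ over the positive interval $I^+$ and $N^-$ over the negative interval $I^-$, so that $D^+ + D^-$ controls the boundary energy flux through $\Gamma$ over all of $(-T,T)$; the averaging of the two cycles is precisely what lets (VC), which allows either sign of $t$, do its work. Feeding the transversal arrival guaranteed by (VC) into the propagation-of-singularities analysis of Lemma \ref{lem_microlocal} — the local analogue of \cite{Bardos1992} — should show that the dissipation controls the full microlocal energy of $U_0$, giving the inequality up to a term that is lower order, and hence compact, relative to $\norm{\cdot}_*$ on the compactly supported space $\H(K)$. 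The remaining difficulty is to absorb this lower-order term, which I would do by the standard compactness–uniqueness argument: were the clean inequality to fail, there would be $U_0^k$ with $\norm{U_0^k}_* = 1$ and $D^\pm(U_0^k) \to 0$; passing to a weak limit $U_0$ via the compact embedding, one finds $\p_t u = 0$, and therefore also $\p_\nu u = 0$, on $(-T,T) \times \Gamma$, so that $u$ has vanishing Cauchy data there. Unique continuation for the wave equation, with (VC) ensuring that the region of influence covers $K$, then forces $U_0 = 0$, contradicting the normalisation. This produces $c > 0$, whence $\norm{R} \le \sqrt{1 - c/2} < 1$, which completes the proof.
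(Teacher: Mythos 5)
Your proposal is correct and follows essentially the same route as the paper: your $R = P_K L$ is exactly the paper's $R = P_K S$ obtained from the nudging identity of Lemmas \ref{lem_key} and \ref{lem_nudge}, your energy-dissipation and convexity bound is Lemma \ref{lem_energy_S}, and your observability inequality via Lemma \ref{lem_microlocal}, compactness--uniqueness, and unique continuation is precisely the content of Lemma \ref{lem_norm_stable}. The only device you leave implicit is how the qualitative regularity statement of Lemma \ref{lem_microlocal} becomes a quantitative estimate with a compact remainder; the paper does this by applying the closed graph theorem to the solution spaces $X \subset Y$, which produces exactly the lower-order term $\norm{u}_{L^2((-T,T)\times M)}$ that you then remove by the compactness--uniqueness argument (Lemma \ref{lem_comp_uniq}).
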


\begin{remark}
Under the assumptions of Theorem \ref{th_stable},
an alternative reconstruction method is obtained by the Neumann series inversion
$$
V_0 = (1-R)^{-1} P_K N(0) = \sum_{j=0}^\infty R^j P_K N(0).
$$
Here $R = P_K S$, where $S$ is defined in Lemma \ref{lem_nudge} below.
\end{remark}

\section{Back and forth nudging identity}

We define $S^\pm_{\uparrow\downarrow}$ to be the solution operator at time $T_{\u\d}^\pm$ of the problem
\begin{align}
\label{eq_wave_S}
\begin{cases}
\p_t^2 u - \Delta_{g,\mu} u = 0, &\text{in $I^\pm \times M$},
\\
\p_\nu u  + \chi\p_t u = 0,
 &\text{in $I^\pm  \times \p M$},
 \\
 u|_{t = t_{\u\d}^\pm} = u_0,\ \p_t u|_{t = t_{\u\d}^\pm} = u_1, &\text{in $M$},
\end{cases}
\end{align}
where $\chi = \chi_0$ in the two cases $t_\u^\pm$
and $\chi = -\chi_0$ in the two cases $t_\d^\pm$.
That is,
$$
S^\pm_{\uparrow\downarrow} U_0 = (u(T_{\u\d}^\pm), \p_t u(T_{\u\d}^\pm)),
$$
where $u$ is the solution of (\ref{eq_wave_S})
with the initial conditions $U_0 = (u_0, u_1)$ at $t_{\u\d}^\pm$.

If $u_0 \in H^1(M)$ and $u_1 \in L^2(M)$ are compactly supported, then the equation (\ref{eq_wave_S}) has a unique solution $u$ in the energy space 
$C(I^\pm; H^1(M)) \cap C^1(I^\pm; L^2(M))$.
The sign of $\chi$ is important here, and in fact, the opposite choices of sign do not give well-posed problems.
For the convenience of the reader, we have included a study of (\ref{eq_wave_S}) in the appendix below.

\begin{lemma}
\label{lem_key}
Let $V_0 \in \H(K)$, $W_0 \in \H$ and define 
$$
V(t) = (v(t), \p_t v(t)), \quad t \in (-T,T),
$$
where $v$ is the solution of (\ref{eq_wave_meas}) with the initial conditions $V_0 = (v_0, v_1)$. Then 
\begin{align*}
V(T_{\u\d}^\pm) + S_{\u\d}^\pm (W_0 - V(t_{\u\d}^\pm)) &=
N^\pm_{\u\d}(W_0; \Lambda V_0).
\end{align*}
\end{lemma}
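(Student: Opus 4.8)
The plan is to prove the identity by linearity, exploiting the observation that the difference between the nudging solution and the measurement solution satisfies the \emph{homogeneous} stabilizing problem (\ref{eq_wave_S}). Let $u$ denote the solution of the nudging equation (\ref{eq_wave_nudge}) on $I^\pm$ with initial data $W_0 = (w_0, w_1)$ prescribed at $t_{\u\d}^\pm$, so that by definition $N_{\u\d}^\pm(W_0; \Lambda V_0) = (u(T_{\u\d}^\pm), \p_t u(T_{\u\d}^\pm))$. Set $z = u - v$ and $Z(t) = (z(t), \p_t z(t))$. My goal is to show that $z$ solves (\ref{eq_wave_S}) with initial data $W_0 - V(t_{\u\d}^\pm)$ at $t_{\u\d}^\pm$; then the definition of the solution operator gives $Z(T_{\u\d}^\pm) = S_{\u\d}^\pm(W_0 - V(t_{\u\d}^\pm))$, and since $Z(T_{\u\d}^\pm) = N_{\u\d}^\pm(W_0;\Lambda V_0) - V(T_{\u\d}^\pm)$, rearranging yields exactly the claim.

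First I would dispose of the interior equation and the initial data, both of which are immediate. Since $u$ and $v$ each satisfy $\p_t^2 \cdot - \Delta_{g,\mu} \cdot = 0$ on $I^\pm \times M$, so does the difference $z$. Evaluating at $t = t_{\u\d}^\pm$ gives $z = w_0 - v(t_{\u\d}^\pm)$ and $\p_t z = w_1 - \p_t v(t_{\u\d}^\pm)$, that is $Z(t_{\u\d}^\pm) = W_0 - V(t_{\u\d}^\pm)$, which is the asserted initial data. The crux is the boundary condition. On $I^\pm \times \p M$ one computes
\begin{align*}
\p_\nu z + \chi \p_t z
&= (\p_\nu u + \chi \p_t u) - (\p_\nu v + \chi \p_t v)
\\
&= \chi \p_t \Lambda V_0 - \chi \p_t v,
\end{align*}
where I used the nudging boundary condition for $u$ and $\p_\nu v = 0$ from (\ref{eq_wave_meas}). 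Here the key point is that $\chi = \pm\chi_0$ with $\supp(\chi_0) = \overline\Gamma$, so the product only sees the boundary trace on $\Gamma$; since $\Lambda V_0 = v|_{(-T,T)\times\Gamma}$ by definition, we have $\chi \p_t \Lambda V_0 = \chi \p_t v$ as boundary terms on $I^\pm \times \p M$. Therefore $\p_\nu z + \chi \p_t z = 0$, and $z$ indeed solves the homogeneous problem (\ref{eq_wave_S}), completing the argument.

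The main obstacle is not the algebra but justifying that these manipulations take place in the correct function space, so that the traces and the solution operators are well defined. I expect the decisive points to be well-posedness and boundary regularity: one must know that (\ref{eq_wave_nudge}) admits a unique energy solution $u \in C(I^\pm; H^1(M)) \cap C^1(I^\pm; L^2(M))$, which relies on the trace regularity $\Lambda V_0 \in H^1((-T,T)\times\Gamma)$ available from \cite{Lasiecka1989} since $V_0 \in \H(K)$ with $K \subset M^\inter$, together with the well-posedness theory for the stabilizing boundary condition established in the appendix, where the sign of $\chi$ is essential. Once $u$ and $v$ are genuine energy solutions, the Neumann traces $\p_\nu u$ and $\p_\nu v$ and the identity $\chi \p_t \Lambda V_0 = \chi \p_t v$ are meaningful, the difference $z$ lies in the same energy class, and applying $S_{\u\d}^\pm$ to $Z(t_{\u\d}^\pm) = W_0 - V(t_{\u\d}^\pm)$ is legitimate.
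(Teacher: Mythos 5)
Your proof is correct, and it is essentially the paper's argument run in the opposite direction. The paper starts from the solution $u$ of the \emph{homogeneous} stabilizing problem (\ref{eq_wave_S}) with initial data $W_0 - V(t_{\u\d}^\pm)$ and verifies that the sum $w = u + v$ solves the nudging problem (\ref{eq_wave_nudge}) with data $W_0$, so that $N_{\u\d}^\pm(W_0;\Lambda V_0) = W(T_{\u\d}^\pm) = S_{\u\d}^\pm(W_0 - V(t_{\u\d}^\pm)) + V(T_{\u\d}^\pm)$; you start from the nudging solution and verify that the difference $z = u - v$ solves (\ref{eq_wave_S}). Both hinge on the same two facts: linearity, and the identity $\chi\p_t\Lambda V_0 = \chi\p_t v$ on $I^\pm\times\p M$, which holds because $\supp(\chi_0)=\overline\Gamma$ and $\Lambda V_0 = v|_{(-T,T)\times\Gamma}$. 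The one substantive difference is which well-posedness statement carries the argument. The appendix (Theorem \ref{th_solvability}) treats only the homogeneous boundary condition of (\ref{eq_wave_S}); the inhomogeneous problem (\ref{eq_wave_nudge}) is never independently shown to be solvable in the paper. The paper's orientation handles this automatically: it \emph{constructs} the nudging solution as $u+v$ (existence), and uniqueness for (\ref{eq_wave_nudge}) reduces to uniqueness for (\ref{eq_wave_S}) because the difference of two solutions satisfies the homogeneous condition. Your orientation instead presupposes that the nudging solution exists --- legitimate within the paper's conventions, since the definition of $N_{\u\d}^\pm$ already assumes it --- but your closing appeal to ``the well-posedness theory for the stabilizing boundary condition established in the appendix'' as justification for solvability of (\ref{eq_wave_nudge}) is slightly circular: the natural way to produce that solution is precisely the decomposition $u = z + v$, i.e., the paper's proof. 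So both arguments are valid, but the paper's direction is the self-contained one, proving existence for (\ref{eq_wave_nudge}) as a byproduct rather than consuming it as a hypothesis.
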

\begin{proof}
Let $u$ be the solution of (\ref{eq_wave_S}) with the initial conditions 
$$
(u|_{t=t_{\u\d}^\pm}, \p_t u|_{t=t_{\u\d}^\pm}) = W_0 - V(t_{\u\d}^\pm),
$$
and write $W_0 = (w_0, w_1)$. Then the sum $w = u + v$ satisfies 
\begin{align*}
\begin{cases}
\p_t^2 w - \Delta_{g,\mu} w = 0, &\text{in $I^\pm \times M$},
\\
\p_\nu w  + \chi\p_t w = \chi\p_t v,
 &\text{in $I^\pm \times \p M$},
 \\
 w|_{t = t_{\u\d}^\pm} = w_0,\ \p_t w|_{t = t_{\u\d}^\pm} = w_1, &\text{in $M$}.
\end{cases}
\end{align*}
We write $W(t) = (w(t), \p_t w(t))$, $t \in (0,T)$,
and observe that 
$W(T_{\u\d}^\pm) = N_{\u\d}^\pm(W_0; \Lambda V_0)$ since the above equation for $w$ coincides with the equation (\ref{eq_wave_nudge}).
The claim follows from
$$
W(T_{\u\d}^\pm) = S_{\u\d}^\pm(W_0 - V(t_{\u\d}^\pm)) + V(T_{\u\d}^\pm).
$$
\end{proof}

\begin{lemma}
\label{lem_nudge}
Let $V_0 \in \H(K)$ and $U_0 \in \H$. Then the following nudging identity holds
\begin{align}
\label{nudging_id}
N (U_0; \Lambda V_0) - V_0 = S (U_0 - V_0),
\end{align}
where $S = (S^+ + S^-)/2$,
$S^+ = S_\d^+ S_\u^+$ and $S^- = S_\u^- S_\d^-$.
\end{lemma}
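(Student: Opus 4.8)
The goal is to establish the nudging identity \eqref{nudging_id} by applying Lemma \ref{lem_key} to each of the two half-cycles $N^+$ and $N^-$ and then averaging. The plan is to unwind the definitions of $N^+ = N_\d^+ N_\u^+$ and $N^- = N_\u^- N_\d^-$ step by step, using Lemma \ref{lem_key} once at each application of a single nudging operator $N^\pm_{\u\d}$, and to keep careful track of how the reference solution $V(t)$ of \eqref{eq_wave_meas} threads through the composition.

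First I would treat the forward cycle $N^+(U_0;\Lambda V_0) = N_\d^+(N_\u^+(U_0;\Lambda V_0);\Lambda V_0)$. Applying Lemma \ref{lem_key} with $W_0 = U_0$ in the ``up'' case gives
\begin{align*}
N_\u^+(U_0;\Lambda V_0) = V(T_\u^+) + S_\u^+(U_0 - V(t_\u^+)).
\end{align*}
Now I apply Lemma \ref{lem_key} again in the ``down'' case, this time with $W_0$ equal to the output just obtained. The key algebraic observation is that $t_\d^+ = T = T_\u^+$, so that $V(t_\d^+) = V(T_\u^+)$; hence when I form $W_0 - V(t_\d^+)$ the explicit $V(T_\u^+)$ term cancels exactly, leaving
\begin{align*}
N_\d^+\big(N_\u^+(U_0;\Lambda V_0);\Lambda V_0\big) = V(T_\d^+) + S_\d^+ S_\u^+(U_0 - V(t_\u^+)).
\end{align*}
Since $T_\d^+ = 0 = t_\u^+$, we have $V(T_\d^+) = V(0) = V_0$ and $V(t_\u^+) = V_0$, so subtracting $V_0$ yields $N^+(U_0;\Lambda V_0) - V_0 = S^+(U_0 - V_0)$ with $S^+ = S_\d^+ S_\u^+$. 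The backward cycle $N^-$ is handled by the same telescoping, using $t_\u^- = -T = T_\d^-$ for the cancellation and $T_\u^- = 0 = t_\d^-$ to identify the endpoint values with $V_0$; this gives $N^-(U_0;\Lambda V_0) - V_0 = S^-(U_0 - V_0)$ with $S^- = S_\u^- S_\d^-$.

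Finally I average the two identities. Since $N = (N^+ + N^-)/2$ and $S = (S^+ + S^-)/2$, and since both cycles contribute the same constant $V_0$, averaging gives directly
\begin{align*}
N(U_0;\Lambda V_0) - V_0 = \tfrac12\big(S^+ + S^-\big)(U_0 - V_0) = S(U_0 - V_0),
\end{align*}
which is \eqref{nudging_id}. The only real point requiring care — rather than a genuine obstacle — is the bookkeeping of the endpoint matching $t_\d^+ = T_\u^+$ and $t_\u^- = T_\d^-$, which is exactly what makes the intermediate reference-solution terms telescope; once this is verified the proof is a mechanical composition of two applications of Lemma \ref{lem_key} per cycle.
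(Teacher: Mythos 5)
Your proof is correct and takes essentially the same route as the paper's: two applications of Lemma \ref{lem_key} per cycle, with the reference solution telescoping at the matched endpoints ($t_\d^+ = T_\u^+$ and $t_\u^- = T_\d^-$), followed by averaging the two cycles. The paper's proof merely writes the concrete values $V(T)$, $V_0$ in place of your general endpoint notation; the argument is identical.
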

\begin{proof}
We omit writing the parameter $\Lambda V_0$ in the proof. 
We apply Lemma \ref{lem_key} twice
\begin{align*}
N_\d^+ (N_\u^+ (U_0))
&= N_\d^+ (V(T) + S_\u^+(U_0 - V_0)) 
\\&= 
V_0 + S_\d^+((V(T) + S_\u^+(U_0 - V_0)) - V(T))
\\\notag&=
V_0 + S_\d^+ S_\u^+(U_0 - V_0).
\end{align*}
An analogous computation for the composition $N^- = N_\u^- N_\d^-$ shows that 
$$
N^- (U_0) - V_0 = S^- (U_0 - V_0),
$$
and we obtain (\ref{nudging_id})
after taking the average.
\end{proof}

\section{Energy identity and unique continuation}

The following energy identity is well-known, however, we give a short proof for the convenience of the reader.

\begin{lemma}
\label{lem_energy_decay}
Let a solution $u \in C^\infty((0,T) \times M)$ to
\begin{align*}
\begin{cases}
\p_t^2 u - \Delta_{g,\mu} u = 0, &\text{in $(0,T) \times M$},
\\
\p_\nu u  + \chi\p_t u = 0,
 &\text{in $(0,T)  \times \p M$},
 \end{cases}
\end{align*}
satisfy $\supp(u) \subset \R \times \mathcal K$ where $\mathcal K \subset M$ is compact. Then
\begin{align}
\label{energy_estimate}
\norm{U(t_1)}_*^2 
=
\norm{U(t_0)}_*^2
- \int_{t_0}^{t_1} \int_{\p M} \chi |\p_t u|^2 \, \mu dy dt, \quad t_0 < t_1,
\end{align}
where $U(t) = (u(t), \p_t u(t))$ and $dy$ is the Riemannian volume measure on $(\p M, g)$.
\end{lemma}
\begin{proof}
We first integrate by parts
\begin{align*}
\p_t \norm{U}_*^2 
&= 
\int_M \p_t^2 u\, \p_t u 
    + \pair{\mu \nabla u, \nabla \p_t u}\, dx 
\\
&= \int_M (\p_t^2 u - \Delta_{g,\mu} u)\, \p_t u\, \mu dx  
+ \int_{\p M} \p_\nu u\, \p_t u \, \mu dy
\\
&=
- \int_{\p M} \chi |\p_t u|^2 \, \mu dy,
\end{align*}
and then integrate in time
\begin{align*}
\norm{U(t_1)}_*^2 - \norm{U(t_0)}_*^2 = - \int_{t_0}^{t_1} \int_{\p M} \chi |\p_t u|^2 \, \mu dy dt.
\end{align*}
\end{proof}

\begin{lemma}
\label{lem_unique_cont}
Let a function $u$ in the energy space
\begin{align}
\label{energy_space_uc}
C((-T,T); H^1(M)) \cap C^1((-T,T); L^2(M))
\end{align}
satisfy the wave equation
\begin{align}
\label{eqw_uniq_cont}
\p_t^2 u - \Delta_{g,\mu} u = 0, \quad \text{in $(-T,T) \times M$},
\end{align}
and write $U_0 = (u|_{t=0}, \p_t u|_{t=0})$.
Let $K \subset M^\inter \cap M(\Gamma,T)$ be compact.
Suppose, furthermore, that $U_0 \in \H(K)$,
$\p_\nu u = 0$ on $(-T,T) \times \p M$ and that $\p_t u = 0$ on $(-T,T) \times \Gamma$.
Then $U_0 = 0$.
\end{lemma}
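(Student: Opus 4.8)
The plan is to reduce the lemma to two unique continuation arguments—one hyperbolic, one elliptic—after first differentiating in time to manufacture a solution with full (Dirichlet and Neumann) vanishing Cauchy data on the lateral piece $(-T,T)\times\Gamma$. First I would set $w=\partial_t u$. Formally $w$ again solves $\partial_t^2 w-\Delta_{g,\mu}w=0$ in $(-T,T)\times M$; differentiating the boundary condition $\partial_\nu u=0$ in $t$ gives $\partial_\nu w=0$ on $(-T,T)\times\partial M$, while the hypothesis $\partial_t u=0$ on $(-T,T)\times\Gamma$ says $w=0$ there. Thus $w$ has vanishing Dirichlet \emph{and} Neumann traces on $(-T,T)\times\Gamma$. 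Since $u$ only lies in the energy space \eqref{energy_space_uc}, I would legitimize $w$ and its traces by mollifying $u$ in the $t$ variable: the equation and the boundary condition are translation invariant in $t$, so the mollification is harmless, and one passes to the limit using the trace theory of \cite{Lasiecka1989}.

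Next I would invoke the sharp unique continuation theorem for the wave operator (Tataru's theorem, in the boundary form used in the boundary control method, rather than a Holmgren/analytic version, since the metric is merely smooth). A solution with zero Cauchy data on $(-T,T)\times\Gamma$ and the homogeneous Neumann condition on the rest of $\partial M$ vanishes in the double cone of influence $D_\Gamma=\{(t,x):\ d(x,\Gamma)<T-|t|\}$; here the Neumann condition on $\partial M\setminus\Gamma$ is exactly what allows the continuation to be carried across reflections, and the geometric domain is correct because $d$ is the boundary distance of $(M,g)$. Restricting to $t=0$ gives $w(0,\cdot)=u_1=0$ on the open set $\Omega':=\{d(\cdot,\Gamma)<T\}$. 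Since $\supp(u_1)\subset K\subset\{d\le T\}$ while the level set $\{d=T\}$ has zero Riemannian volume (as $|\nabla d|=1$ a.e., $d$ cannot be constant on a positive-measure set), this forces $u_1=0$ in $L^2(M)$.

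For $u_0$ I would exploit that $w=\partial_t u\equiv 0$ on the open set $D_\Gamma$, so $u(t,x)=u_0(x)$ is independent of $t$ there; substituting into the wave equation gives $\Delta_{g,\mu}u_0=\partial_t^2 u=0$ on $\Omega'$, i.e. $u_0$ is $\Delta_{g,\mu}$-harmonic in $\Omega'$. Because $K\subset M^\inter$ sits at positive distance from $\partial M$, the function $u_0$ vanishes on a nonempty collar of $\Gamma$ inside $\Omega'$; weak unique continuation for the smooth second-order elliptic operator $\Delta_{g,\mu}$ then yields $u_0=0$ on the connected component of $\Omega'$ meeting $\Gamma$. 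Each $x\in K\cap\Omega'$ lies in that component, since a minimizing curve from $x$ to $\Gamma$ has $d(\cdot,\Gamma)<T$ all along it and its interior lies in $\Omega'\cap M^\inter$, joining $x$ to the collar. Hence $u_0=0$ on $K\cap\Omega'$, and the nullity of $\{d=T\}$ again upgrades this to $u_0=0$. Together with $u_1=0$ this gives $U_0=0$.

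The main obstacle is the hyperbolic step: one must use the \emph{sharp} unique continuation so that the continuation reaches precisely $\{d(\cdot,\Gamma)<T\}$ at $t=0$ (matching the geometric hypothesis $K\subset M(\Gamma,T)$), and one must correctly fold the Neumann condition on all of $\partial M$ into the argument to account for rays reflecting off $\partial M\setminus\Gamma$; this is the analogue at the level of unique continuation of the propagation results of \cite{Bardos1992}. The remaining points—the low regularity of $w$, dealt with by time-mollification, and the connectedness bookkeeping for $\Omega'$—are comparatively routine.
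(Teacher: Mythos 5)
Your proposal is correct and follows essentially the same route as the paper's proof: differentiate in time so that $w=\partial_t u$ has vanishing Cauchy data on $(-T,T)\times\Gamma$, apply the sharp Tataru-type unique continuation (the paper cites the semi-global form in Katchalov--Kurylev--Lassas) to kill $w$ in the double cone $\{(t,x):\ d(x,\Gamma)\le T-|t|\}$, deduce $u_1=0$ and $\Delta_{g,\mu}u_0=0$ on the domain of influence, conclude by elliptic unique continuation together with the support assumption $U_0\in\H(K)$, and handle the energy-space regularity by mollification in $t$, exactly as the paper's convolution argument. Your extra touches (the measure-zero argument for the level set $\{d=T\}$ and the explicit connectedness bookkeeping via minimizing curves) are refinements of details the paper glosses over, not a different method.
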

\begin{proof}
We write $U_0 = (u_0, u_1)$.
Suppose for the moment that $u \in C^\infty((-T,T) \times M)$.
The function $w = \p_t u$ satisfies (\ref{eqw_uniq_cont})
and both $w$ and $\p_\nu w$ vanish on $(-T,T) \times \Gamma$.
The semi-global version \cite[Th. 3.16]{Katchalov2001} of the time sharp unique continuation result \cite{Tataru1995}, implies that $w$ vanishes in the double cone
of points $(t,x)$ satisfying $d(x,\Gamma) \le T - |t|$.
In particular, $w|_{t=0} = u_1 = 0$ in $M(\Gamma, T)$.
The assumption $U_0 \in \H(K)$ implies that $u_1 = 0$ also outside $M(\Gamma, T)$. 
Furthermore, as $w$ vanishes in the double cone, also $\p_t w = \Delta_{g,\mu} u$ vanishes there.
Thus $\Delta_{g,\mu} u_0 = 0$ in $M(\Gamma, T)$. The assumption $U_0 \in \H(K)$ together with $K\subset M^\inter$ implies that $u_0$ vanishes near $\Gamma$, and therefore $u_0 = 0$ in $M(\Gamma, T)$ by elliptic unique continuation. Finally, $u_0 = 0$ outside $M(\Gamma, T)$ by the assumption $U_0 \in \H(K)$.

Let us now consider the general case where $u$ is in  (\ref{energy_space_uc}). 
Let $\psi \in C_0^\infty(-\epsilon/2,\epsilon/2)$, where $\epsilon >0$ is small.
We extend $u$ by zero to a function on $\R \times M$ and denote the extension still by $u$. The convolution in time $w = \psi * \p_t u$ satisfies (\ref{eqw_uniq_cont}) 
on 
$(-T+\epsilon, T - \epsilon) \times M$,
and the Cauchy data $(w,\p_\nu w)$ vanishes on 
$(-T+\epsilon, T - \epsilon) \times \Gamma$.
Note that $w \in C^\infty(\R; L^2(M))$. Let $t \in \R$
satisfy $|t| < T - \epsilon$. Then
\begin{align}
\label{Delta_w}
\Delta_{g,\mu} w(t) = \p_t^2w(t) \in L^2(M).
\end{align}
As the spatial support of $u$ is contained in the compact set $\mathcal K$, there is a compact manifold with smooth boundary $M_0 \subset M$ such that $M(\Gamma, T) \subset M_0$
and $\p_\nu w(t) = 0$ on $\p M_0$. As $w(t)$ satisfies (\ref{Delta_w}) with vanishing  Neumann boundary conditions on $M_0$, it holds that $w(t) \in H^2(M_0)$, see e.g. \cite[Prop. 7.6]{Taylor1996}.
Hence we may apply \cite[Th. 3.16]{Katchalov2001} on $w$.
As above, we see that 
both $\psi * u_1$ 
and $\Delta_{g,\mu} (\psi * u_0)$ vanish on $M(\Gamma, T-\epsilon)$. Letting $\phi \to \delta$ and $\epsilon \to 0$,
we get 
$$
u_1 = \Delta_{g,\mu} u_0 = 0 \quad \text{on $M(\Gamma, T)^\inter$}.
$$
Using the assumption $U_0 \in \H(K)$ as above, we see that $U_0 = 0$.
\end{proof}

\begin{lemma}
\label{lem_energy_S}
Let $K \subset M^\inter \cap M(\Gamma,T)$ be compact.
Let $U_0 \in \H(K)$, and consider the function
\begin{equation*}
u(t,x) = \begin{cases}
u^+(t, x), & t \ge 0,
\\
u^-(t, x), & t \le 0,
\end{cases}
\end{equation*}
where $u^\pm$ are the solutions of (\ref{eq_wave_S})
with the initial conditions given by $U_0$ at $t_\u^+$
and $t_\d^-$ respectively. Then
$$
\norm{S U_0}_*^2 
\le 
\norm{U_0}_*^2 
- \frac 1 2 
\int_{-T}^{T} \int_{\p M} \chi_0 |\p_t u|^2 \, \mu dy dt.
$$
Moreover, the continuous map
$$
E : \H(K) \to L^2((-T,T) \times \p M), \quad
E U_0 = \sqrt{\chi_0} \p_t u,
$$
is injective.
\end{lemma}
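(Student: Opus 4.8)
\medskip

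The plan is to obtain the energy estimate by feeding each of the four half-cycles $S_{\u\d}^\pm$ into the energy identity of Lemma \ref{lem_energy_decay} and then recombining the two full cycles $S^+$ and $S^-$ by convexity. Consider first $S^+ = S_\d^+ S_\u^+$. The forward solve $S_\u^+$ on $I^+$ is run with $\chi = \chi_0 \ge 0$, so Lemma \ref{lem_energy_decay} gives $\norm{S_\u^+ U_0}_*^2 = \norm{U_0}_*^2 - \int_0^T\int_{\p M}\chi_0|\p_t u^+|^2\,\mu dy dt$, where $u^+$ is precisely the function in the statement. The backward solve $S_\d^+$ is run with $\chi = -\chi_0 \le 0$; reading the identity from $t=T$ down to $t=0$ shows the energy only decreases, so $\norm{S^+ U_0}_*^2 \le \norm{S_\u^+ U_0}_*^2$. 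Chaining these yields
\[
\norm{S^+ U_0}_*^2 \le \norm{U_0}_*^2 - \int_0^T\int_{\p M}\chi_0|\p_t u^+|^2\,\mu dy dt .
\]

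The identical bookkeeping for $S^- = S_\u^- S_\d^-$, where now the first, backward, solve produces $u^-$, gives the same bound with the boundary integral taken over $I^- = (-T,0)$ and $u^-$ in place of $u^+$. Since $\norm{\cdot}_*^2$ comes from an inner product, the parallelogram law gives $\norm{SU_0}_*^2 = \norm{\tfrac12(S^+U_0 + S^-U_0)}_*^2 \le \tfrac12(\norm{S^+U_0}_*^2 + \norm{S^-U_0}_*^2)$. Adding the two cycle bounds and using that $u = u^+$ on $I^+$ and $u = u^-$ on $I^-$ collapses the two boundary integrals into $\int_{-T}^T\int_{\p M}\chi_0|\p_t u|^2\,\mu dy dt$, which is exactly the asserted estimate. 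The continuity of $E$ is then free: dropping the nonnegative term $\norm{SU_0}_*^2$ gives $\|EU_0\|_{L^2((-T,T)\times\p M)}^2 = \int_{-T}^T\int_{\p M}\chi_0|\p_t u|^2\,\mu dy dt \le 2\norm{U_0}_*^2$.

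For injectivity, suppose $EU_0 = 0$, that is $\sqrt{\chi_0}\,\p_t u = 0$ on $(-T,T)\times\p M$. Then $\chi_0\p_t u = 0$ everywhere on the boundary, so the boundary conditions of both $u^+$ and $u^-$ collapse to $\p_\nu u = 0$. Because $u^+$ and $u^-$ carry the same Cauchy data $U_0$ at $t=0$, the glued function $u$ is an energy-space solution of the wave equation on all of $(-T,T)\times M$ with $\p_\nu u = 0$ on $(-T,T)\times\p M$ and with $\p_t u = 0$ on the open set $\{\chi_0 > 0\}$. As $\overline{\{\chi_0>0\}} = \supp\chi_0 = \overline\Gamma$, this set has domain of influence $M(\Gamma,T) \supseteq K$, so Lemma \ref{lem_unique_cont}, applied with $\{\chi_0>0\}$ as the observation set, forces $U_0 = 0$.

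The error-prone part is the sign-and-direction bookkeeping for $S_{\u\d}^\pm$: one must check that each half-cycle is dissipative in the sense in which it is actually run, which depends on matching the sign of $\chi$ to the convention in (\ref{eq_wave_S}) and on the forward/backward orientation. The genuinely delicate step is the injectivity, where I expect two points to require care: first, verifying that the piecewise-defined $u$ is a bona fide solution in the energy space across $t=0$, so that Lemma \ref{lem_unique_cont} is applicable; and second, extracting vanishing Cauchy data on an open boundary set with full domain of influence from the condition $EU_0=0$, rather than only on $\supp\chi_0$. Finally, since Lemma \ref{lem_energy_decay} is stated for smooth solutions, the energy identities should first be established for smooth data and then extended to energy-space solutions by density, exactly as in the proof of Lemma \ref{lem_unique_cont}.
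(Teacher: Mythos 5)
Your proposal is correct and follows essentially the same route as the paper: the energy identity of Lemma \ref{lem_energy_decay} on the first half-cycles, monotonicity of the backward half-cycles, and convexity of $\norm{\cdot}_*^2$ for the estimate (first for smooth data, then by density), and for injectivity the reduction to $\p_\nu u=0$ on $(-T,T)\times\p M$, gluing $u^\pm$ across $t=0$ into an energy-space solution, and invoking Lemma \ref{lem_unique_cont}. Your one deviation is harmless and in fact slightly more careful than the paper: you apply the unique continuation lemma with observation set $\{\chi_0>0\}$, using $d(\cdot,\{\chi_0>0\})=d(\cdot,\Gamma)$ so the domains of influence coincide, whereas the paper deduces $\p_t u = 0$ on all of $(-T,T)\times\Gamma$ directly from $\supp(\chi_0)=\overline{\Gamma}$.
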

\begin{proof}
Suppose for the moment that $U_0 \in C_0^\infty(K)^2$.
Then both $u^\pm$ are smooth, see Theorem \ref{th_solvability} in the appendix below.
As $\chi_0$ is non-negative, the energy identity (\ref{energy_estimate}) implies that $$\norm{S_{\u\d}^\pm U_0}_* \le \norm{U_0}_*.$$ 
Note that for the backward propagators $S_\d^\pm$
we use the fact that $\chi=-\chi_0$.
It holds that
\begin{align*}
\norm{S U_0}_*^2
&\le \frac 1 4 
\left(\norm{S_\d^+ S_\u^+ U_0}_* + \norm{S_\u^- S_\d^- U_0}_* \right)^2
\le \frac 1 2 
\left(\norm{S_\u^+ U_0}_*^2 + \norm{S_\d^- U_0}_*^2 \right)
\\&=
 \frac 1 2 
\left( \norm{U_0}_*^2 - \int_0^T \int_{\p M} \chi_0 |\p_t u|^2 \, \mu dy dt + \norm{U_0}_*^2 - \int_{-T}^0 \int_{\p M} \chi_0 |\p_t u|^2 \, \mu dy dt \right)
\\&= 
\norm{U_0}_*^2 - \frac 1 2 \int_{-T}^T \int_{\p M} \chi_0 |\p_t u|^2 \, \mu dy dt.
\end{align*}
The general case $U_0 \in \H(K)$ follows since $C_0^\infty(K)^2$ is dense in $\H(K)$.

Suppose now that $E U_0 = 0$. 
Then $\p_\nu u = 0$ on $(-T, T) \times \p M$
and, using the assumption $\supp(\chi_0) = \overline \Gamma$, we see that $\p_t u = 0$ on $(-T,T) \times \Gamma$. 
Note that by Theorem \ref{th_solvability} (cf. Appendix) $u^+$ is in the energy space (\ref{solvability_esp}) and $u^-$ is in the corresponding energy space on $[-T, 0]$. As $K\subset M^{int}$, we see that $u$ belongs to the space (\ref{energy_space_uc}).
Lemma \ref{lem_unique_cont} implies that $U_0 = 0$.
\end{proof}

\section{The unstable case}

\begin{lemma}
\label{lem_S}
The operator $P_K S : \H(K) \to \H(K)$ is self-adjoint and positive.
\end{lemma}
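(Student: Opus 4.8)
The plan is to show first that $S$ itself is self-adjoint and positive with respect to the energy inner product $\pair{\cdot,\cdot}_*$ associated to (\ref{energy_norm}), and then to deduce the statement for $P_K S$ by a one-line projection argument. I would work on the Hilbert space obtained by completing the finite-energy pairs $(u_0,u_1)$ in the norm $\norm{\cdot}_*$, on which $P_K$ is the orthogonal projection onto the closed subspace $\H(K)$ and on which the propagators $S_{\u\d}^\pm$ act. The fact that $S U_0$ may have spatial support larger than $M(\Gamma,T)$ is harmless, since only $P_K S U_0$ enters the claim.

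The heart of the argument is an adjoint identity for the elementary propagators. First I would record a cross-energy identity: if $u$ and $v$ both solve $\p_t^2 \cdot - \Delta_{g,\mu}\cdot = 0$, then integration by parts exactly as in Lemma \ref{lem_energy_decay}, but with two distinct solutions, gives
\begin{align*}
\frac{d}{dt}\pair{U(t),V(t)}_* = \frac12\int_{\p M}\left(\p_t u\,\p_\nu v + \p_t v\,\p_\nu u\right)\mu dy,
\end{align*}
the interior terms cancelling because $\p_t^2 = \Delta_{g,\mu}$ on each factor. The key observation is that if $u$ carries the boundary condition $\p_\nu u + \chi_0\p_t u = 0$ while $v$ carries the opposite-sign condition $\p_\nu v - \chi_0\p_t v = 0$, then the integrand vanishes identically, so $\pair{U,V}_*$ is conserved in time. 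Since $S_\u^+$ propagates the $+\chi_0$ equation forward from $t=0$ to $t=T$ and $S_\d^+$ propagates the $-\chi_0$ equation backward from $t=T$ to $t=0$, conservation of $\pair{U,V}_*$ across $[0,T]$ reads $\pair{S_\u^+ U_0, V_T}_* = \pair{U_0, S_\d^+ V_T}_*$; that is, $(S_\u^+)^* = S_\d^+$. The same computation on $I^-$ gives $(S_\u^-)^* = S_\d^-$, equivalently $(S_\d^-)^* = S_\u^-$. I would derive these first for smooth, compactly supported data via Theorem \ref{th_solvability} and then extend to all of $\H$ by density.

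With the adjoint identities in hand the conclusion is immediate. Since $S^+ = S_\d^+ S_\u^+ = (S_\u^+)^* S_\u^+$ and $S^- = S_\u^- S_\d^- = (S_\d^-)^* S_\d^-$, each of $S^+$ and $S^-$ has the form $A^* A$, hence is self-adjoint with $\pair{S^+ U_0, U_0}_* = \norm{S_\u^+ U_0}_*^2 \ge 0$ and $\pair{S^- U_0, U_0}_* = \norm{S_\d^- U_0}_*^2 \ge 0$; therefore $S = (S^+ + S^-)/2$ is self-adjoint and positive. Finally, for $U_0, V_0 \in \H(K)$ the self-adjointness of $P_K$ together with $P_K U_0 = U_0$ and $P_K V_0 = V_0$ gives $\pair{P_K S U_0, V_0}_* = \pair{S U_0, V_0}_* = \pair{U_0, S V_0}_* = \pair{U_0, P_K S V_0}_*$, so $P_K S$ is self-adjoint on $\H(K)$, while $\pair{P_K S U_0, U_0}_* = \pair{S U_0, U_0}_* \ge 0$ gives positivity.

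I expect the only real obstacle to be functional-analytic bookkeeping rather than a new idea: making precise the Hilbert space on which $P_K$ and the $S_{\u\d}^\pm$ simultaneously live, justifying the integration by parts and the density passage for data that is merely of finite energy (leaning here on the appendix's well-posedness and energy estimates), and checking that the opposite-sign boundary problem defining $S_\d^+$ and $S_\d^-$ is exactly the backward-well-posed one, so that the cross-energy identity may legitimately be integrated across $[0,T]$ and $[-T,0]$.
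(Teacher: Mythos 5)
Your outline follows the same route as the paper's proof: conservation in time of the cross energy product for a pair of solutions carrying the $+\chi_0$ and $-\chi_0$ boundary conditions, the resulting adjoint relations $(S_\u^+)^* = S_\d^+$ and $(S_\d^-)^* = S_\u^-$, the factorizations $S^+=(S_\u^+)^*S_\u^+$ and $S^-=(S_\d^-)^*S_\d^-$, and the one-line projection step at the end. Up to this skeleton there is nothing to compare; it is the paper's argument.

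The gap is in the step you set aside as ``functional-analytic bookkeeping'', and it is precisely where the paper's proof spends almost all of its effort. Theorem \ref{th_solvability} gives smooth solutions only for data in $C_0^\infty(K)^2$ with $K \subset M^\inter$ compact, so your cross-energy identity is justified only when \emph{both} solutions have interior-supported data; hence the identity $\pair{S_\u^+ U_0, V_T}_* = \pair{U_0, S_\d^+ V_T}_*$ is available for $V_T \in C_0^\infty(M^\inter)^2$ and, by continuity, for $V_T$ in the energy-norm closure of that class. But to obtain $\pair{S^+U_0,U_0}_* = \norm{S_\u^+U_0}_*^2$ you must apply it with $V_T = S_\u^+U_0$, and this datum is in general \emph{not} in that closure: its first component $u_\u(T)$ is an $H^1$ function whose trace on $\p M$ is non-zero as soon as the wave has reached $\Gamma$ (which is the only interesting case --- otherwise $S$ is norm-preserving), and such a function cannot be approximated in the energy norm by functions supported in $M^\inter$; this is the classical $H^1_0 \ne H^1$ obstruction, boundary pieces having positive capacity. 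Nor can you integrate by parts directly against the backward solution with data $S_\u^+U_0$, since that data satisfies no compatibility conditions at $\{T\}\times \p M$ and the corresponding solution is not covered by the smoothness part of Theorem \ref{th_solvability}. This is exactly the difficulty the paper's proof is built to remove: it decomposes $S_\u^+U_0 = (u_\u(T), -\p_t u_\u(T)) + (0, 2\p_t u_\u(T))$, observes that the backward evolution of the first piece is the time reversal of the smooth forward extension of $u_\u$ beyond time $T$ (so the conservation identity applies to it directly), and approximates the second piece by data $(0,\phi_j)$ with $\phi_j \in C_0^\infty(M^\inter)$ --- which is possible precisely because its $H^1$ component vanishes, leaving only an $L^2$ approximation of the velocity component, where there is no trace obstruction. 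Your density plan could alternatively be repaired by replacing $C_0^\infty(M^\inter)^2$ with a dense class of boundary-compatible data, e.g. $u_0 \in H^2(M)$ with $\p_\nu u_0 = 0$ on $\p M$ and $u_1 \in H^1_0(M)$, compactly supported, as in the proof of Theorem \ref{th_solvability} and Lemma \ref{lem_density}, for which the solutions are $H^2$ and the integration by parts is legitimate; but some such device is indispensable, and the plain density passage from interior-supported smooth data, as you propose it, does not close.
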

\begin{proof}
%
Let us show that $P_K S^+$ is self-adjoint and positive. The case $P_K S^-$ is similar and we omit it.
We write $U_{\u\d}(t) = (u_{\u\d}(t), \p_t u_{\u\d}(t))$
where $u_\u$ and $u_\d$ are smooth solutions of 
\begin{align}
\label{weq_lem_S}
\begin{cases}
\p_t^2 u - \Delta_{g,\mu} u = 0, &\text{in $(0,T) \times M$},
\\
\p_\nu u  + \chi\p_t u = 0,
 &\text{in $(0,T)  \times \p M$},
\end{cases}
\end{align}
with $\chi = \chi_0$ and $\chi = -\chi_0$, respectively.
Then, writing $\pair{\cdot, \cdot}_M$ and $\pair{\cdot, \cdot}_{\p M}$ for the $L^2$ inner products on $M$ and $\p M$ with respect to the measures used in Lemma \ref{lem_energy_decay},
\begin{align*}
\p_t \pair{U_\u, U_\d}_*
&= 
\pair{\p_t^2 u_\u, \p_t u_\d}_M
+ \pair{\nabla u_\u, \nabla \p_t u_\d}_M
+ \pair{\p_t u_\u, \p_t^2 u_\d}_M
+ \pair{\nabla \p_t u_\u, \nabla u_\d}_M
\\&= 
\pair{\chi_0 \p_t u_\u, \p_t u_\d}_{\p M}
- \pair{\p_t u_\u, \chi_0 \p_t u_\d}_{\p M} = 0.
\end{align*}
Hence $\pair{U_\u(0), U_\d(0)}_* 
= 
\pair{U_\u(T), U_\d(T)}_*$.

Let $U_0 \in C_0^\infty(K)^2$, and let $U_\u(t) = (u_\u, \p_t u_\u(t))$ where $u_\u$ satisfies (\ref{weq_lem_S}) with $\chi = \chi_0$ and also the initial condition $U_\u(0) = U_0$.
Then $U_\u$ is smooth, see 
Theorem \ref{th_solvability} in the appendix below.
Set $U^r_\d(t) = (u(t), \p_t u(t))$ where $u(t) = u_\u(2T-t)$.
Then $u$ satisfies (\ref{weq_lem_S})
with $\chi = -\chi_0$, and also 
$U^r_\d(T) = (u_\u(T), -\p_t u_\u(T))$. 
Let $\phi_j \in C_0^\infty(M)$ converge to $2\p_t u_\u(T)$
in $L^2(M)$, and write $U_\d^j(t) = (u_\d^j(t), \p_t u_\d^j(t))$ where $u^j$ is the solution of (\ref{weq_lem_S}) with $\chi = -\chi_0$ satisfying 
$U_\d^j(T) = (0, \phi_j)$. 
Denote by $U_\d$ the solution of (\ref{weq_lem_S}) with $\chi = -\chi_0$ satisfying 
$U_\d(T) = U_\u(T) = S^+_\u U_0$.

Let $W_0 \in C_0^\infty(K)^2$. 
As $U^r_\d$ and $U_\d^j$, $j \in \N$, are smooth, we have
\begin{align*}
\pair{W_0, S^+_\d S^+_\u U_0}_*
&= \pair{W_0, U_\d(0)}_* 
= \lim_{j \to \infty}  \pair{W_0, U^r_\d(0) + U_\d^j(0)}_*
\\&=
\lim_{j \to \infty}  \pair{S^+_\u W_0, U^r_\d(T) + U_\d^j(T)}_*
= \pair{S^+_\u W_0, S^+_\u U_0}_*.
\end{align*}
The above computation, with the roles of $W_0$ and $U_0$
interchanged, implies that 
$$
\pair{S^+_\d S^+_\u W_0, U_0}_* = \pair{S^+_\u W_0, S^+_\u U_0}_* = \pair{W_0, S^+_\d S^+_\u U_0}_*.
$$
The same holds for all $U_0, W_0 \in \H(K)$ by density, which again shows that $P_K S^+$ is self-adjoint and positive.
\end{proof}
The following proof is an adaptation of the proof of \cite[Th. 1.1]{Haine2014}. The main difference is that
the result in \cite{Haine2014} is given in an abstract setting that does not allow for a full characterization of
the convergence.

\begin{proof}[Proof of Theorem \ref{th_unstable}]
By Lemma \ref{lem_nudge},
$$
P_K N(U_0) - V_0 = 
P_K (N(U_0) - V_0) = P_K S(U_0 - V_0), \quad U_0 \in \H.
$$
We iterate this starting from $U_0 = 0$. That is, for $j=1,2,\dots$,
\begin{align}
\label{N_iteration}
(P_K N)^j(0) - V_0 = P_K S((P_K N)^{j-1}(0) - V_0)
= -(P_K S)^j V_0.
\end{align}
Writing $R = P_K S$, it remains to show that the sequence $R^j V_0$, $j\in\mathbb N$, converges to zero.

By Lemma \ref{lem_S}, the operator $R$ is self-adjoint and positive, and whence also the powers $R^j$, $j \in \N$, are self-adjoint and positive.
We will use the fact that a non-increasing sequence of bounded positive operators in a Hilbert space converges pointwise to a bounded operator, see e.g. \cite[Lem. 12.3.2]{tucsnak2009observation}.

Lemma \ref{lem_energy_S} implies that $\norm{SV_0}_* \le \norm{V_0}_*$, $V_0 \in \H(K)$, and that the equality holds only if $V_0 = 0$. Hence
$$
\norm{R(R^{j/2}V_0)}_* = \norm{P_K S(R^{j/2}V_0)}_* 
\le \norm{R^{j/2}V_0}_*, \quad j \in \N,\ V_0 \in \mathcal H(K),
$$
which again implies that the sequence $R^{j}$, $j \in \N$, is non-increasing. Indeed,
\begin{align*}
\langle R^{j+1}V_0,V_0\rangle_* 
&= 
\langle R (R^{j/2}V_0),R^{j/2}V_0\rangle_* \le \|R(R^{j/2}V_0)\|_*\|R^{j/2}V_0\|_*
\\&\le
\|R^{j/2}V_0\|_*^2 
= \langle R^{j/2}V_0,R^{j/2}V_0 \rangle_* = \langle R^j V_0, V_0 \rangle_*, \quad V_0 \in \mathcal H(K).
\end{align*}
Therefore the sequence $R^j$, $j\in\mathbb N$, converges pointwise on $\mathcal H(K)$, say to an operator $R^\infty$,
and it remains to prove that $R^\infty = 0$.

Observe that $RR^\infty = R^\infty$, since for all $V_1, V_2\in\mathcal H(K)$
\begin{equation*}
\langle RR^\infty V_1, V_2\rangle_*=
\lim_{j\to\infty}\langle RR^jV_1, V_2\rangle_*=
\lim_{j\rightarrow\infty}\langle R^{j+1} V_1, V_2\rangle_*=\langle R^\infty V_1, V_2\rangle_*.
\end{equation*}
To get a contradiction, we assume that there is $V_0\in\mathcal H(K)$ such that $R^\infty V_0\ne0$.
Then Lemma \ref{lem_energy_S} implies 
$$
\|R^\infty V_0\|^2_*
=
\langle RR^\infty V_0, R^\infty V_0\rangle_*
\le
\norm{P_K S R^\infty V_0}_* \norm{R^\infty V_0}_*
< \norm{R^\infty V_0}_*^2,
$$
which is indeed a contradiction.
\end{proof}

\section{The stable case}

In this section we prove Theorem \ref{th_stable}
using techniques that are similar to those in \cite{Bardos1992}.
The main difference between Lemma \ref{lem_microlocal} below and \cite[Th. 3.3]{Bardos1992} is that we give a local result under the local geometric assumption (VC), whereas \cite{Bardos1992} gives a global result under a global geometric assumption.

We will use the notion of microlocal regularity at a point, see e.g. \cite[Def. 18.1.30]{Hormander1985}.
Recall that a distribution $u$ on $M$ is said to be microlocally $H^s$ at $(x,\xi) \in T^*M \setminus 0$
if $u = u_0 + u_1$ where $(x,\xi) \notin \WF(u_0)$
and $u_1 \in H^s_{loc}(M)$.
In this case we write $u \in H^s(x,\xi)$.
Here $\WF$ denotes the wave front set, see \cite[Def. 8.1.2]{Hormander1990}, and $H^s_{loc}(M)$, $s \in \R$, is the space of distributions $w$ such that $\phi w$ is in the Sobolev space $H^s(M)$ for all $\phi \in C_0^\infty(M)$.

If $u \in H^s(x,\xi)$ then 
by \cite[Th. 18.1.31]{Hormander1990} 
there is a pseudodifferential operator of order zero $A$ such that $A u \in H^s_{loc}(M)$ and $(x,\xi)$ is not characteristic for $A$.
Moreover, by \cite[Th. 18.1.24']{Hormander1990} 
there is a pseudodifferential operator of order zero $B$
such that 
\begin{align}
\label{I_BA}
(x,\xi) \notin \WF(1 -BA)
\end{align}
where $1$ is the identity map.
Note also that after multiplication by a function $\phi \in C_0^\infty(M)$
satisfying $\phi=1$ near $x$, we may assume that $A u \in H^s(M)$.

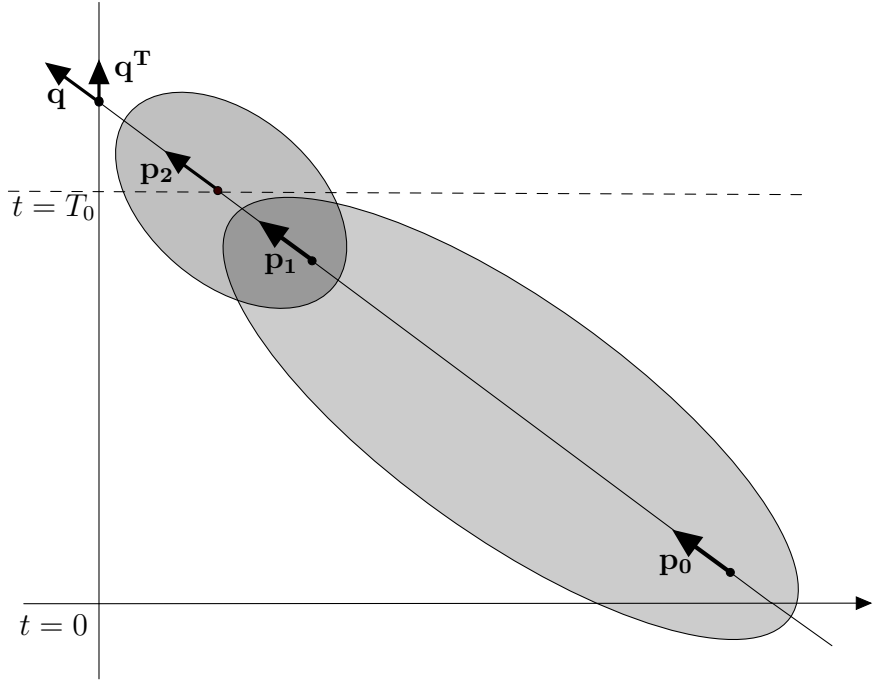
\begin{figure}
\definecolor{ttqqqq}{rgb}{0.2,0.,0.}
\definecolor{qqqqtt}{rgb}{0.,0.,0.2}
\begin{tikzpicture}[line cap=round,line join=round,>=triangle 45,x=1.0cm,y=1.0cm]
\clip(-3.54934,-4.280447239643131) rectangle (8.63064323290623,5.590903835887293);
\draw [->] (-2.,-3.) -- (8.279965554581548,-2.992526375496311);
\draw (-2.,3.6737965245191564)-- (6.943245570316854,-2.9934981825686138);
\draw (-2.8359224517467085,4.) node[anchor=north west] {$\mathbf{q}$};
\draw (-1.6,3.) node[anchor=north west] {$\mathbf{p_2}$};
\draw (5.3,-2.2) node[anchor=north west] {$\mathbf{p_0}$};
\draw [->,line width=1.2pt] (-0.410499975358124,2.499143149937045) -- (-1.1310284049942332,3.0274684553346303);
\draw (-1.9409077093925764,4.508239524377504) node[anchor=north west] {$\mathbf{q^T}$};
\draw (-3.,-3.)-- (-2.,-3.);
\draw (-3.2,-3) node[anchor=north west] {$t=0$};
\draw (-2.,-3.)-- (-2.,5.);
\draw (-2.,-3.)-- (-2.,-4.);
\draw [->,line width=1.6pt] (6.393245107783388,-2.5834663720907445) -- (5.619551077612657,-2.0155054864939372);
\draw [->,line width=1.2pt] (-2.,3.6737965245191564) -- (-2.72052842963611,4.20212182991674);
\draw [->,line width=1.6pt] (0.8381951676062861,1.5689763867012236) -- (0.1176667379701759,2.0973016920988066);
\draw (0.05,1.8) node[anchor=north west] {$\mathbf{p_1}$};
\draw (-3.3,2.595) node[anchor=north west] {$t=T_0$};
\draw (6.940691997840085,-2.984900254961839)-- (7.743927336241997,-3.5589886046401067);
\draw [->,line width=1.2pt] (-2.,3.689331869279828) -- (-2.,4.245083513501792);
\draw [dash pattern=on 4pt off 4pt] (-3.204618606025738,2.4877066925836893)-- (7.339642319482882,2.442645748457584);
\draw [rotate around={144.46771925053451:(3.4733034808294274,-0.5351343467803934)},fill=black,fill opacity=0.2] (3.4733034808294274,-0.5351343467803934) ellipse (4.563892421796626cm and 1.560946946813927cm);
\draw [rotate around={139.39594828546296:(-0.24306352823262412,2.3671309209470253)},fill=black,fill opacity=0.25] (-0.24306352823262412,2.3671309209470253) ellipse (1.7820139930089436cm and 1.1184494352127314cm);
\begin{scriptsize}
\draw [fill=qqqqtt] (-2.,3.6737965245191564) circle (1.5pt);
\draw [fill=ttqqqq] (-0.42044415803143,2.496219075493177) circle (1.5pt);
\draw [fill=black] (6.393245107783388,-2.5834663720907445) circle (1.5pt);
\draw [fill=black] (0.8328820849188356,1.5618496276037517) circle (1.5pt);
\draw [fill=black] (-2.,3.689331869279828) circle (1.5pt);
\end{scriptsize}
\end{tikzpicture}
\label{cutoffs}
\caption{The localization and microlocalization. Here the large grey ellipse indicates the set on which $\psi = 1$, and the small grey ellipse indicates the base space projection of the set on which (\ref{I_BA}) holds.}
\end{figure}

\begin{lemma}
\label{lem_microlocal}
Suppose that the condition (VC) holds for a compact set $K \subset M^\inter$. 
Let $u \in L^2((-T,T) \times M)$
satisfy the wave equation
$$\p_t^2 u - \Delta_{g,\mu} u = 0~~ \mathrm{in}~(-T,T) \times M,$$
and suppose that
$\partial_tu|_{x \in \Gamma}, \p_\nu u|_{x \in \Gamma} \in L_{loc}^2((-T,T) \times \Gamma)$.
Then $u |_{\{t=0\} \times K} \in H^1(K)$ and 
$\p_t u |_{\{t=0\} \times K} \in L^2(K)$.
\end{lemma}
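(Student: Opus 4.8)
The plan is to prove a microlocal statement and then upgrade it to the global conclusion: I will show that for every $(x_0,\xi_0) \in T^*K \setminus 0$ (by conic invariance we may take $|\xi_0| = 1$) the Cauchy data $(u|_{t=0}, \p_t u|_{t=0})$ is microlocally $H^1 \times L^2$ at $(x_0,\xi_0)$, and deduce from this that $u|_{\{t=0\}\times K} \in H^1(K)$ and $\p_t u|_{\{t=0\}\times K} \in L^2(K)$. The whole argument is a local analogue of \cite[Th. 3.3]{Bardos1992}, made possible by the fact that (VC) forces the relevant geodesics into the interior $M^\inter$ until they hit $\Gamma$ transversally, so that no glancing or gliding rays and no reflections from $\p M$ need to be analysed.

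First I would reduce the Cauchy-data statement to a spacetime microlocal statement. Since $u \in L^2$ solves $\p_t^2 u - \Delta_{g,\mu} u = 0$, its wave front set is contained in the characteristic variety $\{(t,x;\tau,\xi) :\ \tau^2 = |\xi|^2\}$, and the hypersurface $\{t=0\}$ is non-characteristic (its conormal $(\tau,\xi)=(1,0)$ is non-characteristic). Hence the only elements of $\WF(u)$ lying over $(0,x_0)$ with spatial covector $\xi_0$ are the two characteristic lifts $p^\pm = (0,x_0;\pm1,\xi_0)$, and a standard non-characteristic restriction argument shows it suffices to prove $u \in H^1(p^+) \cap H^1(p^-)$. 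Because $\p_t$ is elliptic in the time direction on the characteristic set, the same microlocal $H^1$ regularity of $u$ also gives $\p_t u|_{t=0} \in L^2(x_0,\xi_0)$.

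Next comes the propagation. Applying (VC) to the two unit covectors $\pm\xi_0$, each geodesic $\gamma(\cdot;x_0,\pm\xi_0)$ reaches a point $y \in \Gamma$ at some time $t^* \in (-T,T)$ with $\dot\gamma(t^*) \notin T_y\Gamma$ and without meeting $\p M$ in between; these geodesics are precisely the base projections of the null bicharacteristics through $p^\pm$. The hard part will be the boundary regularity at the transversal (hyperbolic) hitting point $(t^*,y)$: I must show that the hypothesis $\p_t u|_\Gamma,\ \p_\nu u|_\Gamma \in L^2_{loc}$ forces $u$ to be microlocally $H^1$ there. Transversality places the bicharacteristic in the hyperbolic region, where the time frequency dominates the tangential one, so the $L^2$ bound on $\p_t u|_\Gamma$ controls the tangential derivatives of $u|_\Gamma$ microlocally at the energy level, and together with $\p_\nu u|_\Gamma \in L^2$ this yields full microlocal $H^1$ control of $u$ on the incoming and outgoing rays. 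I would carry this out with the pseudodifferential calculus set up before the lemma: choose $A$ elliptic at the relevant boundary covector and a parametrix $B$ with $(x,\xi)\notin\WF(1-BA)$, localize in space by $\psi$, and close a microlocal energy estimate by integrating by parts against a suitable multiplier, absorbing the off-diagonal term via (\ref{I_BA}) and the a priori bound $u \in L^2$.

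Finally, once microlocal $H^1$ regularity of $u$ is established at $(t^*,y)$, Hörmander's interior propagation of microlocal regularity along the null bicharacteristic (valid on $(-T,T)\times M^\inter$, since the ray avoids $\p M$ until $t^*$ and $\p_t^2 u - \Delta_{g,\mu}u = 0$) transports it back to $p^\pm$ over $(0,x_0)$. Performing this for both $\pm\xi_0$ gives $u \in H^1(p^+)\cap H^1(p^-)$, so by the first step the Cauchy data is microlocally $H^1\times L^2$ at $(x_0,\xi_0)$. Since (VC) is an open condition it persists on a compact neighbourhood $K' \supset K$, so the microlocal regularity holds at every covector over a neighbourhood of $K$; cutting off by $\phi \in C_0^\infty(M)$ with $\phi = 1$ near $K$ and $\supp(\phi)\subset K'$, the localized Cauchy data is microlocally $H^1\times L^2$ at every covector and therefore lies in $H^1 \times L^2$. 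This yields $u|_{\{t=0\}\times K} \in H^1(K)$ and $\p_t u|_{\{t=0\}\times K} \in L^2(K)$.
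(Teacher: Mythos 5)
Your overall strategy coincides with the paper's: reduce the trace statement to microlocal $H^1$ regularity of $u$ at characteristic points over a neighbourhood of $\{0\}\times K$, use (VC) to follow the bicharacteristic to a transversal (hence hyperbolic) intersection with $(-T,T)\times\Gamma$, convert the $L^2$ boundary data into microlocal $H^1$ regularity there, and propagate back along the ray in the interior. The genuine gap is in the boundary step, which is the crux of the whole lemma. What you assert --- that in the hyperbolic region the $L^2$ bound on $\partial_t u|_\Gamma$ controls the tangential derivatives of $u|_\Gamma$ and, together with $\partial_\nu u|_\Gamma \in L^2$, yields full microlocal $H^1$ control of $u$ on the incoming and outgoing rays --- is precisely the content of \cite[Th. 2.1]{Bardos1992}, which the paper cites at exactly this point. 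It cannot be closed with the calculus set up before the lemma: the operators $A$ and $B$ satisfying (\ref{I_BA}) are interior pseudodifferential operators characterizing interior microlocal regularity, whereas at a boundary point one needs a \emph{tangential} calculus, operators in $(t,y,D_t,D_y)$ only, and the conclusion one can actually obtain there has the form $Au\in H^1(\mathcal U)$ with $A$ tangential and non-characteristic at the boundary covector $q^T$. Even granting that, passing from this tangential statement to honest microlocal regularity at an interior point $p_1$ on the ray is a separate step the paper has to argue (using that the characteristic set of the tangential operator $A$ is closed). Your two-sentence multiplier sketch stands in for a substantial theorem of \cite{Bardos1992}; either cite that theorem, as the paper does, or the proof is incomplete at its core.

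A second, smaller inaccuracy: restricting a function that is microlocally $H^1$ to the non-characteristic hypersurface $\{t=0\}$ gives only $H^{1/2}$ regularity of the trace, and $\partial_t u|_{t=0}\in L^2$ does not follow from restriction either, so your first reduction is not ``standard non-characteristic restriction.'' To get Cauchy data in $H^1\times L^2$ one must use the equation; the paper does this by first reducing, via compactness of $K$ and finite speed of propagation, to showing $u \in H^1((-\epsilon,\epsilon)\times \tilde K)$ for a slightly enlarged set $\tilde K$ and shortened time $T-\epsilon$ (for which (VC) still holds), and then using the energy estimate for the wave equation to recover the traces at $t=0$. Finally, your direct appeal to an $H^1$ version of interior propagation of singularities is legitimate, but note that the paper deliberately avoids it: it decomposes $\psi u = u_0 + u_1$ through the commutator source $\Phi=[\Box,\psi]u$ split by $1=(1-BA)+BA$, so that only the $C^\infty$ propagation theorem \cite[Th. 23.2.9]{Hormander1985} and the $L^2$ energy estimate for the backward mixed problem are needed; this is a convenient way to transport $H^1$ regularity without invoking Sobolev wave front set machinery.
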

\begin{proof}
By compactness of $K$ there is a neighbourhood $\tilde K \subset M^\inter$ of $K$ and $\epsilon > 0$
such that (VC) holds with $K$ replaced by $\tilde K$ and $T$ replaced by $T-\epsilon$.
Finite speed of propagation and the associated energy estimate imply that it is enough to show that $u\in H^1((-\epsilon, \epsilon) \times \tilde K)$. By using a microlocal partition of unity, it is enough to show that $u\in H^1(p_0)$ at each point $p_0 \in T^*((-\epsilon, \epsilon) \times \tilde K)$, see \cite[Th. 18.1.31]{Hormander1985}. 
Furthermore, it is enough to consider characteristic points for the wave operator, that is, points in the set 
$
\{ (t,x,\tau,\xi) \in T^*(\R \times M^\inter) \setminus 0;\ 
\tau^2 = |\xi|^2 \},
$
since $\WF(u)$ is contained in this set.

We assume now that $p_0\in T^*((-\epsilon, \epsilon) \times \tilde K) $ is a characteristic point and let $\gamma$ be the bicharacteristic passing through $p_0$.
The visibility condition (VC) says that 
$\gamma$ intersects the boundary $\p T^* (\R \times M)$
non-tangentially at a point $q$ that lies over the set $(-T,T) \times \Gamma$.
We write $q = (t_0, y_0, 0; \tau_0, \eta_0, \sigma_0)$ in such local coordinates $(y, s)$ of $M$ that $s = 0$ describes the boundary $\p M$. Then (VC) means that $y_0 \in \Gamma$ and $\sigma_0 \ne 0$.
We may assume without loss of generality that $t_0 \in (0,T)$, the case $t_0 \in (-T,0)$ being analogous.

We denote by $q^T=(t_0, y_0; \tau_0, \eta_0)$ the projection
of $q$ on $T^* (\R \times \p M)$,
and consider the time derivative $\partial_t$ as an operator on the boundary $\R \times \p M$. As $\tau_0 \ne 0$, we see that $q^T$ is not characteristic for $\p_t$.
Together with the assumption $\partial_tu\in L_{loc}^2((-T,T) \times \Gamma)$
this implies that $u\in H^1(q^T)$,
see \cite[Th. 18.1.31]{Hormander1985}.

\def\U{\mathcal U}
Since $\sigma_0 \ne 0$, we have that $\tau_0^2 \ne |\eta_0|_g^2$, and therefore $q^T$ is not characteristic for the wave operator. By \cite[Th. 2.1]{Bardos1992} there is 
a tangential pseudodifferential operator 
$A = A(t,y,D_t,D_y)$
of order zero and a neighbourhood $\U \subset \R \times M^\inter$ of $(t_0, y_0, 0)$ such that 
$q^T$ is not characteristic for 
$A$ 
and $Au\in H^1(\U)$.
The set of characteristic points is closed and this, together with the fact that $A$ is tangential, implies that
there is $p_1 \in \gamma \cap T^*(\R \times M^\inter)$ such that $p_1$ is not characteristic for $A$. After multiplication by a cut-off function, we may assume that $Au \in H^1(\R \times M)$.

For two distinct points $p, \tilde p \in \gamma$,
we denote the segment of $\gamma$ between $p$ and $\tilde p$
by $\sigma(p, \tilde p)$.
As discussed above, there is a pseudodifferential operator of order zero $B$ such that 
(\ref{I_BA}) holds with $(x,\xi)$ replaced by $p_1$. 
As the wave front set is closed, there is a point $p_2 \in \sigma(p_1, q)$,
distinct from $p_1$ and $q$, such that 
(\ref{I_BA}) holds with $(x,\xi)$ replaced by 
any point in the segment $\sigma(p_1, p_2)$.
We denote by $T_0$ the time coordinate of $p_2$,
and choose $\psi \in C_0^\infty((-T,T_0) \times M^\inter)$
satisfying $\psi = 1$ in a neighbourhood of $\sigma(p_0, p_1)$, see Figure \ref{cutoffs}.

We define $\Phi = [\Box, \psi] u$
where $[\Box, \psi]$ is the commutator of 
 $\Box = \p_t^2 - \Delta_{g,\mu}$ and $\psi$. Then $\psi u$ satisfies the equation
\begin{align}
\label{Box}
\begin{cases}
\Box u = F, &\mathrm{in}~(-T, T_0)\times M,\\
u=0, &\mathrm{in}~(-T, T_0)\times\partial M,\\
u|_{t=T_0} = \p_t u|_{t=T_0} =0, &\mathrm{in }~M,\\
\end{cases}
\end{align}
with $F = \Phi$.
We consider also the solutions $u_0$ and $u_1$ of 
(\ref{Box}) with $F=(1-B A)\Phi$ and $F=B A\Phi$,
respectively.
Clearly, $\psi u = u_0 + u_1$ and we will establish $u \in H^1(p_0)$ by showing that that $p_0 \notin \WF(u_0)$ and $u_1 \in H^1((-T,T_0) \times M)$.

Let us consider first $u_0$. We have $\sigma(p_0, p_2) \subset T^*(\R \times M^\inter)$ and $u_0 = 0$ for $T_0 > 0$. 
In view of propagation singularities in the interior 
\cite[Th. 23.2.9]{Hormander1985}, it is enough to show that 
$$
\WF((1-BA) \Phi) \cap \sigma(p_0, p_2) = \emptyset.
$$
Now (\ref{I_BA}) implies that $\WF((1-BA) \Phi) \cap \sigma(p_1, p_2) = \emptyset$. Moreover, $\Phi = 0$ near 
$\sigma(p_0, p_1)$ since $\psi = 1$ there, which again implies that 
$\WF((1-BA) \Phi) \cap \sigma(p_0, p_1) = \emptyset$.

We turn to $u_1$. We have
$$
A \Phi = [\Box, \psi] A u + [A, [\Box, \psi]] u \in L^2((-T,T_0) \times M),
$$
since 
$A u \in H^1((-T,T_0) \times M)$,
$u \in L^2((-T,T_0) \times M)$,
and the commutators 
$[\Box, \psi]$ and $[A, [\Box, \psi]]$ 
are of order one and zero, respectively.
As $B$ is of order zero, we have 
$$
BA \Phi \in L^2((-T,T_0) \times M),
$$
and the energy estimate for (\ref{Box}) implies that $u_1 \in H^1((-T,T_0) \times M)$.
\end{proof}

Theorem \ref{th_stable} follows from (\ref{N_iteration})
and the below lemma.

\begin{lemma}
\label{lem_norm_stable}
Suppose that the condition (VC) holds for a compact set $K \subset M^\inter$. 
Then $\norm{S}_{\H(K) \to \H} < 1$.
\end{lemma}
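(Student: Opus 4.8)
The plan is to derive Lemma~\ref{lem_norm_stable} from an observability estimate. Let $u$ be the solution associated with $U_0 \in \H(K)$ as in Lemma~\ref{lem_energy_S}, so that $E U_0 = \sqrt{\chi_0}\,\p_t u$. That lemma gives
\[
\norm{S U_0}_*^2 \le \norm{U_0}_*^2 - \tfrac12 \norm{E U_0}_{L^2((-T,T)\times\p M)}^2 ,
\]
so it suffices to prove the observability inequality $\norm{E U_0}_{L^2}^2 \ge c\,\norm{U_0}_*^2$ for some $c > 0$ and all $U_0 \in \H(K)$; then $c \le 2$ automatically and $\norm{S}_{\H(K)\to\H}^2 \le 1 - c/2 < 1$. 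I would first record the boundary control: since $\p_\nu u = -\chi\p_t u = \mp\chi_0\p_t u$ on $(-T,T)\times\p M$ and $\chi_0 \ge \delta > 0$ on any $\Gamma' \Subset \Gamma$, both $\norm{\p_t u}_{L^2((-T,T)\times\Gamma')}$ and $\norm{\p_\nu u}_{L^2((-T,T)\times\Gamma')}$ are bounded by $C\,\norm{E U_0}_{L^2}$.

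The central step is a quantitative a priori estimate underlying Lemma~\ref{lem_microlocal}. Fix $\Gamma' \Subset \Gamma$ and let $\mathcal D$ be the Banach space of solutions $u \in L^2((-T,T)\times M)$ of the wave equation, supported in a fixed compact subset of $M$, with $\p_t u,\p_\nu u \in L^2((-T,T)\times\Gamma')$, normed by the sum of these three $L^2$ norms. Lemma~\ref{lem_microlocal} asserts that $u \mapsto (u|_{t=0},\p_t u|_{t=0})|_K$ maps $\mathcal D$ into $\H(K)$; as this map is closed, the closed graph theorem makes it bounded, giving
\[
\norm{U_0}_*^2 \le C\bigl(\norm{u}_{L^2((-T,T)\times M)}^2 + \norm{\p_t u}_{L^2((-T,T)\times\Gamma')}^2 + \norm{\p_\nu u}_{L^2((-T,T)\times\Gamma')}^2\bigr)
\]
for every $U_0 \in \H(K)$ with solution $u$. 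Combined with the boundary control from the first paragraph, this yields
\[
\norm{U_0}_*^2 \le C'\bigl(\norm{E U_0}_{L^2}^2 + \norm{u}_{L^2((-T,T)\times M)}^2\bigr),
\]
in which the last term is a compact lower-order remainder to be removed by compactness and uniqueness.

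To remove it I would argue by contradiction. If observability fails, choose $U_0^n \in \H(K)$ with $\norm{U_0^n}_* = 1$ and $\norm{E U_0^n}_{L^2}\to 0$, and pass to a weakly convergent subsequence $U_0^n \rightharpoonup U_0$ in $\H(K)$. The solutions $u^n$ are bounded in the energy space and supported in a fixed compact set by finite speed of propagation, so by the Aubin--Lions lemma $u^n \to u$ strongly in $L^2((-T,T)\times M)$, where $u$ is the solution with data $U_0$. Since $E$ is bounded and linear, hence weakly continuous, $E U_0^n \rightharpoonup E U_0$, while $E U_0^n \to 0$, so $E U_0 = 0$. Applying the estimate above to the difference, which by linearity is the solution with data $U_0^n - U_0$, gives
\[
\norm{U_0^n - U_0}_*^2 \le C'\bigl(\norm{E U_0^n}_{L^2}^2 + \norm{u^n - u}_{L^2((-T,T)\times M)}^2\bigr) \to 0,
\]
using $E(U_0^n - U_0) = E U_0^n$. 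Hence $\norm{U_0}_* = 1$, so $U_0 \ne 0$; but $E U_0 = 0$ together with the injectivity of $E$ from Lemma~\ref{lem_energy_S} forces $U_0 = 0$, a contradiction.

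The main obstacle I anticipate is the quantitative upgrade of Lemma~\ref{lem_microlocal}: confirming that the qualitative microlocal statement does produce a uniform estimate with a compact $L^2$ remainder, which amounts to checking completeness of $\mathcal D$ and closedness of the trace-to-Cauchy-data map (so that the closed graph theorem applies), or equivalently tracking the constants through the propagation-of-singularities and energy estimates. The remaining ingredients---weak compactness in $\H(K)$, the Aubin--Lions strong $L^2$ convergence exploiting the uniform compact support, and the weak continuity and injectivity of $E$---are standard once that estimate is in hand.
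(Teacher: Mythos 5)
Your proposal follows the same skeleton as the paper's proof: reduce the bound $\norm{S}_{\H(K)\to\H}<1$ to an observability inequality via Lemma \ref{lem_energy_S}, derive an a priori estimate with a compact $L^2((-T,T)\times M)$ remainder by applying the closed graph theorem to a Banach space of solutions whose well-definedness rests on Lemma \ref{lem_microlocal}, and then remove the remainder by compactness--uniqueness using the injectivity of $E$. Your final contradiction argument with Aubin--Lions is just an unfolding of the paper's abstract Lemma \ref{lem_comp_uniq} combined with the compact embedding $H^1\subset L^2$ on the fixed compact support, and is correct. The genuine gap is in the central step, exactly where you anticipated trouble, but not for the reason you name: your space $\mathcal D$ only encodes boundary regularity on a compact subset $\Gamma'\Subset\Gamma$, whereas Lemma \ref{lem_microlocal} assumes $\p_t u, \p_\nu u \in L_{loc}^2((-T,T)\times\Gamma)$ on \emph{all} of $\Gamma$, and its proof genuinely uses this: the condition (VC) only guarantees that each unit covector over $K$ flows out to \emph{some} non-tangential point of $\Gamma$, which may well lie in $\Gamma\setminus\Gamma'$, where elements of $\mathcal D$ carry no information. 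Hence the claim that $u\mapsto (u|_{t=0},\p_t u|_{t=0})|_K$ maps $\mathcal D$ into $\H(K)$ is unjustified, and the closed graph argument collapses; for an arbitrary $\Gamma'\Subset\Gamma$ one would need (VC) relative to $\Gamma'$, which is not implied by (VC) for $\Gamma$.

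There are two ways to close this. One is to prove an additional lemma, by compactness of the unit sphere bundle over $K$, that the hitting points in (VC) can be confined to the interior of a fixed compact $\Gamma''\Subset\Gamma$, take $\Gamma'\supset\Gamma''$, and check that the proof of Lemma \ref{lem_microlocal} runs with $\Gamma$ replaced by the interior of $\Gamma''$; this is real extra work you would have to supply. The paper's fix is simpler and is worth internalizing: do not shrink $\Gamma$, but instead weight the boundary norm, defining $X$ by the norm $\norm{u}_{L^2((-T,T)\times M)}+\norm{\sqrt{\chi_0}\,\p_t u}_{L^2((-T,T)\times\Gamma)}+\norm{\p_\nu u}_{L^2((-T,T)\times\Gamma)}$. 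Since $\chi_0$ is positive on $\Gamma$, membership in $X$ still yields $\p_t u\in L_{loc}^2((-T,T)\times\Gamma)$, so Lemma \ref{lem_microlocal} applies to every element of $X$, and the closed graph theorem gives $\norm{U_0}_*\le C\bigl(\norm{\sqrt{\chi_0}\,\p_t u}_{L^2((-T,T)\times\Gamma)}+\norm{u}_{L^2((-T,T)\times M)}\bigr)$ directly. This weighted choice has the further advantage that the boundary terms are controlled by $\norm{EU_0}$ with no constants depending on a lower bound for $\chi_0$: indeed $\sqrt{\chi_0}\,\p_t u = EU_0$ and $\norm{\p_\nu u}_{L^2((-T,T)\times\Gamma)}=\norm{\chi_0\p_t u}_{L^2((-T,T)\times\Gamma)}\le C\norm{EU_0}$. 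Your assertion that $\chi_0\ge\delta>0$ on any $\Gamma'\Subset\Gamma$ is, strictly speaking, not a consequence of $\supp(\chi_0)=\overline\Gamma$ alone (it requires reading that hypothesis as strict positivity of $\chi_0$ on $\Gamma$, a reading the paper itself also relies on), and the weighted norms make the point moot. With that modification the rest of your argument is sound.
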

\begin{proof}
We define two spaces as follows
\begin{align*}
X&=\{u\in L^2((-T,T)\times M);\ \Box u=0,\ \sqrt{\chi_0} \partial_tu, \partial_\nu u\in L^2((-T,T)\times\Gamma)\},
\\
Y&=\{u\in L^2((-T,T)\times M);\ \Box u=0,\ 
u|_{\{t=0\} \times K} \in H^1(K),\ \partial_tu|_{\{t=0\} \times K}\in L^2(K)\}.
\end{align*}
They are Banach spaces with respect to norms,
\begin{align*}
\|u\|_X &=\|u\|_{L^2((-T,T) \times M)}+\|\sqrt{\chi_0} \partial_tu\|_{L^2((-T,T) \times\Gamma)}+\|\partial_\nu u\|_{L^2((-T,T)\times\Gamma)},
\\
\|u\|_Y &=\|u\|_{L^2((-T,T) \times M)}
+\|u|_{\{t=0\} \times K}\|_{H^1(K)}
+\|\partial_tu|_{\{t=0\} \times K}\|_{L^2(K)},
\end{align*}
and Lemma \ref{lem_microlocal} implies that $X\subset Y$.
The identity map is closed from $X$ to $Y$ since
both the norms are stronger than 
that of $L^2((-T,T))\times M)$.
The closed graph theorem implies that there is $C>0$ such that
$
\|u\|_Y\le C\|u\|_X
$
for all $u\in X$.

Let $U_0 \in \H(K)$ and consider the function $u$ defined in Lemma \ref{lem_energy_S}. Then $\sqrt{\chi_0} \partial_tu$ is in $L^2((-T,T) \in \p M)$. The boundary condition in (\ref{eq_wave_S}) implies that $u \in X$ and, together with $U_0 \in \H(K)$, it implies that the estimate
$\|u\|_Y\le C\|u\|_X$ reduces to
\begin{align}
\label{before_comp_uniq}
\norm{U_0}_* \le C \left( \|\sqrt{\chi_0} \partial_tu\|_{L^2((-T,T) \times\Gamma)} + \norm{u}_{L^2(-T,T) \times M)} \right).
\end{align}
As $\supp(U_0) \subset K$, we can choose a compact set $M_0 \subset M$ such that $u$ is supported in $[-T,T] \times M_0$.
The embedding $H^1((-T,T) \times M_0) \subset L^2((-T,T) \times M_0)$ is compact, and whence the map 
$U_0 \mapsto u$
is compact from $\H(K)$ to $L^2((-T,T) \times M_0)$.
By Lemma \ref{lem_energy_S},
the map $U_0 \mapsto \sqrt{\chi_0} \p_t u|_{(-T,T) \times \Gamma}$ is injective.
Thus the estimate (\ref{before_comp_uniq}) improves to 
\begin{align}
\label{after_comp_uniq}
\norm{U_0}_* \le C \norm{\sqrt{\chi_0} \p_t u}_{L^2((-T,T) \times \Gamma)},
\end{align}
by using the compactness-uniqueness argument. For the convenience of the reader, we have included 
the proof of this argument, see Lemma \ref{lem_comp_uniq} below. The claimed norm estimate follows now from Lemma \ref{lem_energy_S}.
\end{proof}

\begin{lemma}[Compactness-uniqueness]
\label{lem_comp_uniq}
Let $X$, $Y$ and $Z$ 
be Banach spaces and let $A : X \to Y$ and
$K : X \to Z$ be continuous linear maps. 
Suppose that $A$ is injective, $K$ is compact and that there is $C > 0$ such that 
$$
\norm{x}_X \le C \norm{Ax}_Y + C \norm{Kx}_Z, \quad x \in X.
$$
Then there is $C > 0$ such that 
$\norm{x}_X \le C \norm{Ax}_Y$ for $x \in X.$
\end{lemma}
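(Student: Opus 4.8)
The plan is to argue by contradiction and exploit the compactness of $K$ to absorb the lower-order term $\norm{Kx}_Z$ into the left-hand side. This is the standard compactness--uniqueness argument. Suppose no constant as in the conclusion exists. Then for each $n \in \N$ there is $x_n \in X$ with $\norm{x_n}_X > n \norm{A x_n}_Y$; after normalizing so that $\norm{x_n}_X = 1$, we obtain a sequence with $\norm{A x_n}_Y < 1/n \to 0$.

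Since $(x_n)$ is bounded in $X$ and $K$ is compact, the image $(K x_n)$ has a subsequence that converges in $Z$. I would pass to this subsequence, relabelling it as $(x_n)$, so that $(K x_n)$ is Cauchy in $Z$. Applying the hypothesized estimate to the difference $x_n - x_m$ gives
$$
\norm{x_n - x_m}_X \le C \norm{A(x_n - x_m)}_Y + C \norm{K(x_n - x_m)}_Z.
$$
The first term on the right tends to zero because $\norm{A x_n}_Y \to 0$, and the second because $(K x_n)$ is Cauchy; hence $(x_n)$ is Cauchy in $X$. As $X$ is a Banach space, $x_n \to x$ for some $x \in X$.

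The two properties of the limit then collide. On the one hand $\norm{x}_X = \lim_n \norm{x_n}_X = 1$, so $x \ne 0$. On the other hand, continuity of $A$ gives $A x = \lim_n A x_n = 0$, contradicting the injectivity of $A$. Hence the desired constant must exist.

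The only point requiring any care is the bookkeeping of the normalization and the subsequence extraction, ensuring simultaneously that the limit is nonzero (from the unit norm) and annihilated by $A$ (from $\norm{A x_n}_Y \to 0$); once these are arranged, completeness of $X$ and injectivity of $A$ close the argument at once, so I do not anticipate any serious obstacle.
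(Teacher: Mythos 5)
Your proposal is correct and follows essentially the same compactness--uniqueness argument as the paper: contradiction via a normalized sequence with $\norm{Ax_n}_Y \to 0$, extraction of a subsequence with $(Kx_n)$ Cauchy, the estimate applied to differences to get a Cauchy sequence in $X$, and the limit contradicting injectivity of $A$. No gaps; the only difference is that you spell out the normalization step slightly more explicitly.
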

\begin{proof}
To get a contradiction suppose that there is a sequence $x_j \in X$, $j \in \N$, such that $\norm{x_j}_X = 1$ for all $j$ 
and that $\norm{Ax_j}_Y \to 0$ as $j \to \infty$.
By compactness of $K$, there is a subsequence, still denoted by $x_j$, $j \in \N$, such that $K x_j$, $j \in \N$,
is a Cauchy sequence.
Hence
\begin{align*}
C^{-1} \norm{x_j - x_k}_X 
&\le \norm{A(x_j-x_k)}_Y + \norm{K(x_j - x_k)}_Z 
\\&
\le \norm{A x_j}_Y + \norm{A x_k}_Y + \norm{K x_j - K x_k}_Z \to 0, \quad j,k \to \infty.
\end{align*}
Thus $x_j$, $j \in \N$, is a Cauchy sequence, and therefore it converges, say to $x$.
We have 
$$
\norm{x}_X = \lim_{j \to \infty} \norm{x_j}_X = 1,
\quad \norm{Ax}_Y = \lim_{j \to \infty} \norm{Ax_j}_Y = 0,
$$
which is a contradiction to the injectivity of $A$.
\end{proof}

\appendix
\section*{Appendix. Wave equation with stabilizing boundary conditions}
\label{appendix}

In this appendix we consider the problem (\ref{eq_wave_S}).
For simplicity, we consider only the case $\mu = 1$ and write $\Delta = \Delta_{g,\mu}$. The modifications required in the case of a non-constant $\mu$ are straightforward.
The following theorem is well-known. For the convenience of the reader we give here a short proof with a reduction to the results in \cite{ikawa1970mixed}.
\begin{theorem}
\label{th_solvability}
Let $\chi \in C^\infty(\p M)$ be non-negative.
Then the equation 
\begin{align}
\label{weq_solvability}
\begin{cases}
\p_t^2 u - \Delta u = 0, &\text{in $(0,T) \times M$},
\\
\p_\nu u  + \chi\p_t u = 0,
 &\text{in $(0,T)  \times \p M$},
\\
 u|_{t = 0} = u_0,\ \p_t u|_{t = 0} = u_1, &\text{in $M$},
 \end{cases}
\end{align}
has a unique solution $u$ in the space
\begin{align}
\label{solvability_esp}
\mathcal E=C([0,T]; H^1(M)) \cap C^1([0,T]; L^2(M))
\end{align}
for any $u_0 \in H^1(M)$ and $u_1 \in L^2(M)$
with compact $\supp(u_j) \subset M$, $j=0,1$.
Moreover, if $u_0, u_1 \in C_0^\infty(K)$ and $K \subset M^\inter$ is compact, then the solution $u$ is smooth.
\end{theorem}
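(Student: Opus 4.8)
The plan is to reduce \eqref{weq_solvability} on the unbounded manifold $M$ to a mixed hyperbolic problem on a bounded domain and then quote the existence, uniqueness and regularity theory of \cite{ikawa1970mixed}. First I would localize using finite speed of propagation. Writing $K_0 = \supp(u_0) \cup \supp(u_1)$ and letting $c_{\max}$ bound the sound speed on the compact domain of influence, I would fix a bounded submanifold $M_0 \subset M$ with smooth boundary, $\overline{M_0}$ compact, containing a neighbourhood of $\{x \in M;\ d(x, K_0) \le c_{\max} T\}$. Its boundary splits as $\p M_0 = (\p M \cap \overline{M_0}) \cup \Sigma$, where the first part carries the stabilizing condition and $\Sigma \subset M^\inter$ is an artificial interior wall on which I impose, say, a Dirichlet condition $u = 0$.

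On $M_0$ I would invoke \cite{ikawa1970mixed} for the mixed problem with $\p_\nu u + \chi \p_t u = 0$ on $\p M \cap \overline{M_0}$ and $u = 0$ on $\Sigma$. The Dirichlet part is classical, so the only point to verify is that the impedance condition $\p_\nu u + \chi \p_t u = 0$ belongs to the admissible class of \cite{ikawa1970mixed}; here the hypothesis $\chi \ge 0$ enters decisively, making the boundary condition dissipative and ensuring the relevant Lopatinski/dissipativity requirement. This is precisely the structural reason why the opposite sign fails, as remarked after \eqref{eq_wave_S}. Ikawa's theorem then supplies a unique solution $u$ in the energy space \eqref{solvability_esp} over $M_0$ for data $u_0 \in H^1$, $u_1 \in L^2$ of compact support, and the energy identity of Lemma \ref{lem_energy_decay} controls its norm.

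To return to $M$ I would use finite speed of propagation: applying the localized energy estimate over backward light cones shows that $u$ vanishes in a neighbourhood of $\Sigma$ for all $t \in [0,T]$, so the artificial wall is never felt and the zero-extension of $u$ solves \eqref{weq_solvability} on all of $M$ in the space \eqref{solvability_esp}. Uniqueness is immediate from Lemma \ref{lem_energy_decay}: the difference $w$ of two solutions has vanishing initial data and satisfies the homogeneous dissipative condition, so \eqref{energy_estimate} gives $\norm{W(t)}_* \le \norm{W(0)}_* = 0$ and hence $w = 0$.

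For the smoothness statement, the assumption $u_0, u_1 \in C_0^\infty(K)$ with $K \subset M^\inter$ means the data vanish to infinite order near $\p M$, so every compatibility condition at the corner $\{t=0\} \times \p M$ holds automatically and Ikawa's higher-order regularity theory yields $u \in C^\infty$. In practice I would realize this by differentiating in $t$: since $\chi$ is time-independent, each $\p_t^k u$ solves the same class of problem, with initial data produced from $u_0, u_1$ through the equation (for instance $\p_t^2 u|_{t=0} = \Delta u_0$), again smooth and compactly supported in $M^\inter$, so $\p_t^k u$ lies in \eqref{solvability_esp} for all $k$; elliptic regularity for the Neumann-type problem $\Delta u = \p_t^2 u$ with $\p_\nu u = -\chi \p_t u$ then bootstraps spatial smoothness. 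The main obstacle is the bookkeeping at the boundary: confirming that $\p_\nu u + \chi \p_t u = 0$ lies in the class treated by \cite{ikawa1970mixed} (which is where dissipativity of $\chi \ge 0$ is essential) and checking the corner compatibility for the regularity claim, while the truncation of the unbounded $M$ is routine once finite speed of propagation is in hand.
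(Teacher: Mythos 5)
There is a genuine gap, and it sits at the center of your argument: you assert that Ikawa's theorem ``supplies a unique solution $u$ in the energy space \eqref{solvability_esp} over $M_0$ for data $u_0 \in H^1$, $u_1 \in L^2$ of compact support.'' It does not. The well-posedness results of \cite{ikawa1970mixed} require data with extra regularity \emph{and} compatibility at the boundary --- in the form used by the paper: $u_0 \in H^2(M)$ with $\p_\nu u_0 = 0$ on $\p M$ and $u_1 \in H^1_0(M)$ --- and they produce $H^2$ (or, for $C_0^\infty$ data, smooth) solutions. For merely finite-energy data $(u_0,u_1) \in H^1 \times L^2$ nothing in \cite{ikawa1970mixed} applies directly, on a truncated domain or otherwise. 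The missing step is an approximation argument: one must (a) show that compatible data are dense in $H^1(M) \times L^2(M)$, which is not automatic because the constraint is on the Neumann trace $\p_\nu u_0$ (this is exactly the content of the paper's Lemma \ref{lem_density}, proved via a right inverse of the trace map), and (b) prove an a priori energy estimate (the paper uses a Gr\"onwall argument giving $\norm{u}_{\mathcal E} \le C\norm{u_0}_{H^1(M)} + C\norm{u_1}_{L^2(M)}$ for the regular solutions) so that the solutions for approximating data form a Cauchy sequence in $\mathcal E$ and the solution map extends continuously. You raise compatibility conditions only in connection with the final smoothness claim, but they are already the crux of the existence claim for rough data; without (a) and (b) your proof establishes the theorem only for data that are smooth, or at least compatible, near $\p M$.

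Two further points need care. First, your uniqueness argument applies Lemma \ref{lem_energy_decay} to the difference of two energy-space solutions, but that lemma is stated (and proved by integration by parts) for smooth solutions; for solutions merely in $\mathcal E$ the energy identity itself requires a regularization or density justification --- again the same missing approximation machinery. Second, your truncation imposes a Dirichlet condition on the artificial wall $\Sigma$ while keeping the impedance condition on $\p M \cap \overline{M_0}$; where the two boundary pieces meet, the domain has a corner and the boundary condition changes type, and such mixed Dirichlet/impedance problems are not covered by \cite{ikawa1970mixed}. This particular issue is repairable (smooth the corner and extend $\chi$ by a non-negative cut-off, so that the artificial part simply carries a Neumann condition $\chi = 0$, which lies in the admissible dissipative class), but as written the localization step also does not go through. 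Note that the paper avoids truncation altogether: it applies \cite{ikawa1970mixed} for compactly supported compatible data and then runs the density-plus-energy-estimate extension, which is where the real work of the theorem lies.
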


\begin{proof}
If $u_0, u_1 \in C_0^\infty(K)$ then the claim immediately follows from \cite{ikawa1970mixed}. Choose $u_0\in H^2(M)$ such that $\partial_\nu u_0=0$ on $\partial M$, and $u_1\in H_0^1(M)$. Then \cite{ikawa1970mixed} implies that (\ref{weq_solvability}) has unique solution $u\in H^2((0, T)\times M)$.

We define the energy as
$$
E(t) = \int_M |u(t)|^2 + |\p_t u(t)|^2 + |\nabla u(t)|^2\, \mu dx,
$$
and, analogously to the proof of Lemma \ref{lem_energy_decay}, we have
\begin{align*}
\p_t E  
\le C E + \norm{\p_t^2 u - \Delta u}_{L^2(M)}^2.
\end{align*}
Gr\"onwall's inequality implies that 
\begin{align}
\label{solvability_energy}
E(t) \le C E(0) + C \norm{\p_t^2 u - \Delta u}_{L^2((0,T) \times M)}^2, \quad t \in (0,T).
\end{align}
Thus,
$$
\norm{u}_{\mathcal{E}} \le C \norm{u_0}_{H^1(M)} + C \norm{u_1}_{L^2(M)}.
$$
Lemma \ref{lem_density} below implies that the map $(u_0, u_1) \mapsto u$ has a unique continuous extension 
$$H^1(M) \times L^2(M) \to \mathcal{E},$$
which concludes the proof of the theorem.
\end{proof}

\begin{lemma}
\label{lem_density}
Let $u \in H^1(M)$. Then there is a sequence $u_j \in H^2(M)$, $j \in \N$, such that $u_j \to u$ in $H^1(M)$ and $\p_\nu u_j = 0$ on $\p M$ for all $j \in \N$.
\end{lemma}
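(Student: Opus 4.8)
The plan is to approximate $u$ first by a smooth function, \emph{ignoring} the boundary condition, and then to correct the normal derivative by subtracting an explicit function supported in a thin collar of $\p M$ whose $H^1$ norm tends to zero. Thus the construction decouples into a soft density step and an elementary explicit correction.

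First I would use the standard density in $H^1(M)$ of functions $v \in C^\infty(\overline M)$ with compact support (smooth up to the boundary, but with \emph{no} boundary condition imposed); on the unbounded $M$ this is obtained by a cut-off at infinity, legitimate since $\|u\|_{L^2}$ is finite, together with a local mollification through a partition of unity. Hence, given $\delta > 0$, I fix such a $v$ with $\norm{u - v}_{H^1(M)} < \delta/2$ and write $\phi = \p_\nu v$ for its normal trace, which is again smooth and compactly supported in $\p M$. The remaining task is to kill $\phi$ by an $H^1$-small, $H^2$ correction.

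For the correction I work in Fermi (boundary normal) coordinates $(y,s)$ on a collar of the compact set $\supp(\phi) \subset \p M$, where $s = d(\cdot, \p M)$ and the inward unit normal is $\p_s$, so that $\p_\nu = -\p_s$ on $\p M$. Fixing $\eta \in C^\infty$ with $\supp(\eta) \subset [0,1)$ and $\eta(0)=1$, I set, for small $\epsilon > 0$,
$$
w_\epsilon(y,s) = -\phi(y)\, s\, \eta(s/\epsilon),
$$
extended by zero to all of $M$. Then $w_\epsilon \in C^\infty(\overline M)$ has compact support, hence lies in $H^2(M)$, and a direct computation gives $\p_\nu w_\epsilon = -\p_s w_\epsilon|_{s=0} = \phi$, so that $\p_\nu(v - w_\epsilon) = 0$ on $\p M$. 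To see that $\norm{w_\epsilon}_{H^1(M)} \to 0$, note that $w_\epsilon$ is supported in the slab $\{s < \epsilon\}$ over $\supp(\phi)$: the explicit factor $s$ forces $\norm{w_\epsilon}_{L^2}^2 = O(\epsilon^3)$ and $\norm{\nabla_y w_\epsilon}_{L^2}^2 = O(\epsilon^3)$, while $\p_s w_\epsilon = -\phi\,(\eta + (s/\epsilon)\eta')$ is bounded on a slab of width $\epsilon$, giving $\norm{\p_s w_\epsilon}_{L^2}^2 = O(\epsilon)$. Choosing $\epsilon$ so small that $\norm{w_\epsilon}_{H^1} < \delta/2$ and setting $u_j = v - w_\epsilon$ with $\delta = 1/j$ yields the desired sequence: $u_j \in H^2(M)$, $\p_\nu u_j = 0$ on $\p M$, and $\norm{u - u_j}_{H^1} < 1/j$.

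The point requiring the most care is the opening step, namely the density of compactly supported functions smooth up to the boundary together with the existence of the collar and Fermi coordinates near $\supp(\phi)$ on the unbounded manifold $M$. Taking $v$ with compact support renders both of these purely \emph{local} statements over a compact region, so no global geometric hypothesis on $M$ is needed and the boundary correction stays elementary; the key algebraic observation is simply that the prefactor $s$ makes the $L^2$ and tangential parts of $w_\epsilon$ of higher order in $\epsilon$, while $\p_s s\,\eta(s/\epsilon)$ still equals $1$ at $s=0$, allowing $\p_\nu w_\epsilon$ to match the prescribed datum $\phi$ exactly.
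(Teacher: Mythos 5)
Your proof is correct, but it takes a genuinely different route from the paper. The paper also begins with the soft density step (smooth $\phi_j \to u$ in $H^1(M)$), but then invokes a right inverse $P : H^{3/2}(\p M) \times H^{1/2}(\p M) \to H^2(M)$ of the trace map: it sets $w_j = P(\phi_j|_{\p M}, 0)$, so that $\tilde u_j = \phi_j - w_j$ lies in $H^1_0(M)$, approximates $\tilde u_j$ by interior test functions $\psi_j \in C_0^\infty(M)$, and takes $u_j = \psi_j + w_j$; the Neumann condition holds because $\p_\nu w_j = 0$ by construction of $P$ and $\p_\nu \psi_j = 0$ trivially. In contrast, you never touch the Dirichlet trace: you cancel the normal derivative directly with the explicit collar function $w_\epsilon(y,s) = -\phi(y)\, s\, \eta(s/\epsilon)$, exploiting the standard scaling fact that a prescribed normal derivative can be achieved at arbitrarily small $H^1$ cost (your $O(\epsilon^{3}) + O(\epsilon)$ bookkeeping is right, and only finiteness, not smallness, of the $H^2$ norm of $w_\epsilon$ is needed). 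What each approach buys: the paper's argument avoids any coordinate computation by leaning on Sobolev trace-extension theory, at the price of needing that machinery on a possibly non-compact boundary (where, strictly speaking, it also needs the traces $\phi_j|_{\p M}$ to lie in $H^{3/2}(\p M)$, i.e.\ some decay or compact support, a point the paper glosses over); your argument is more elementary and self-contained, stays purely local over the compact set $\supp(\p_\nu v)$, and produces approximants that are actually smooth up to the boundary with compact support, which is slightly stronger than the $H^2$ membership asserted in the statement. Both proofs share the same unaddressed technicality, namely the density in $H^1(M)$ of compactly supported functions smooth up to the boundary on the unbounded manifold $M$, which requires cut-offs with uniformly bounded gradients; you at least flag the cut-off at infinity explicitly.
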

\begin{proof}
Consider a right inverse of the trace map 
$$
P : H^{3/2}(\p M) \times H^{1/2}(\p M) \to H^2(M),
\quad \p_\nu^k P(h_0, h_1)|_{\p M} = h_k, \quad k=0,1.
$$
Let a sequence $\phi_j \in C^\infty(M)$, $j \in \N$,
converge to $u$ in $H^1(M)$.
Write $w_j = P(\phi_j, 0)$ and $\tilde u_j = \phi_j - w_j$.
Then $\tilde u_j \in H_0^1(M)$ and there is $\psi_j \in C_0^\infty(M)$ such that $\norm{\psi_j - \tilde u_j}_{H^1(M)} \le 1/j$.
Set $u_j = \psi_j + w_j$. Then $u_j \in H^2(M)$, 
$\p_\nu u_j = \p_\nu w_j = 0$ on $\p M$
and
$$
\norm{u_j - u}_{H^1(M)}
\le 
\norm{\psi_j - \tilde u_j}_{H^1(M)} 
+ \norm{\tilde u_j + w_j - u}_{H^1(M)}
\le 1/j + \norm{\phi_j - u}_{H^1(M)} \to 0.
$$
\end{proof}

\noindent{\bf Acknowledgements.}
The research was supported by EPSRC grant EP/L026473/1.
The authors thank C. Laurent for introducing them to \cite{Ito2011}, and G. Haine and B. Cox for useful discussions. Moreover, the authors thank P. Stefanov for bringing \cite{Nguyen2015} to their attention. The authors are grateful to the anonymous referees for their comments, which helped to improve the paper.

\bibliographystyle{abbrv}
\bibliography{main}

\ifdraft{
\listoftodos
}{}

\end{document}